\def\UXnpq{\UDF(\puz_{p,q},n)}
\def\cc{\hspace{.8mm},\hspace{.4mm}}
\def\puz{\protect\operatorname{Puz}}
\def\hdim{\protect\operatorname{hdim}}
\def\sal{\protect\operatorname{Sal}}
\def\raag{\protect\operatorname{RAAG}}
\def\UConf{\protect\operatorname{UConf}}
\def\DF{\protect\operatorname{DConf}}
\def\UDF{\protect\operatorname{UDConf}}
\def\UF{\protect\operatorname{UConf}}
\def\C{\protect\operatorname{C}}
\def\UC{\protect\operatorname{UC}}
\def\hdim{\protect\operatorname{hdim}}
\newtheorem{proposition}{Proposition}[section]
\newtheorem{corollary}[proposition]{Corollary}
\newtheorem{definition}[proposition]{Definition}
\newtheorem{theorem}[proposition]{Theorem}
\newtheorem{remark}[proposition]{Remark}
\newtheorem{example}[proposition]{Example}
\newtheorem{lemma}[proposition]{Lemma}
\newtheorem{examples}[proposition]{Examples}
\begin{document}

\title{Square-section braid groups and Higman-Neumann-Neumann extensions}

\author{Omar Alvarado-Gardu\~no and Jes\'us Gonz\'alez}

\date{\today}

\maketitle

\begin{abstract}
For positive integers $n$, $p$ and $q$ with $pq-n>0$, let $\UC(n,p\times q)$ denote the configuration space of $n$ unlabelled hard unit squares in the rectangle $[0,p]\times[0,q]$, and let $B_n(p\times q)$ denote the corresponding fundamental group. It is known that, as $pq-n$ becomes large, $\UC(n,p\times q)$ starts capturing homotopical properties of the classical configuration space of $n$ unlabelled pairwise-distinct points in the plane. At the start of this approximation process, $\UC(pq-1,p\times q)$ is homotopy equivalent to a wedge of $(p-1)(q-1)$ circles, while the only other general families of spaces $\UC(n,p\times q)$ known to be aspherical are $\UC(n,p\times2)$ for $p\geq n$, and $\UC(pq-2,p\times q)$. The fundamental groups of the former family are known to be responsible for the ``right-angled'' relations in Artin's classical braid groups. We prove that the fundamental groups of the latter family have a minimal presentation all whose relators are commutators. In particular, after explaining how $B_{2p-2}(p\times2)$ arises as the right-angled Artin group (RAAG) associated to a certain meta-edge, we show that $B_{3p-2}(p\times3)$ is a Higman-Neumann-Neumann extension of the RAAG associated to the corresponding meta-square. We provide a geometric interpretation of the latter fact in terms of Salvetti complexes.
\end{abstract}

{\small 2020 Mathematics Subject Classification: 05C25, 20F05, 20F36, 20F65, 55R80, 57M07.}

{\small Keywords and phrases: Configurations of hard squares, Tietze transformation, right-angled Artin group, Higman-Neumann-Neumann extension.}

\section{Introduction}
Configuration spaces of points are pivotal in Mathematics and their applications across science. From a purely theoretical perspective, much of their interest originates from a well known theorem of Fox and Neuwirth (\cite{MR150755}) stating that the classical configuration space $\UConf(\mathbb{R}^2,n)$ of $n$ unlabelled distinct points in the plane classifies Artin's braid group $B_n$ on $n$ strands. More recently, there has been an increasing interest in systems of particles with actual geometry due to their applications outside pure mathematics. For instance, in topological robotics, configuration spaces of geometric particles (CSGP) have been used in the design of algorithms for collision-free motion planning of autonomous agents in constrained environments (\cite{robotics}). Likewise, in soft condensed matter, CSGP have applications in entropy-driven self-assembly processes (\cite{sq2}) whereas, in statistical mechanics, physicists have studied CSGP to gain knowledge about how specific changes in system parameters (like packing density) correspond to topological shifts and phase transitions in matter. 

\begin{remark}\label{melting}{\em
By examining structural and thermodynamic properties as functions of the packing density, physicists have identified different phases in CSGP. In particular, configuration spaces of 2-dimensional hard (i.e.~non-overlapping) objects have been shown to undergo through sophisticated melting processes (\cite{PhysRevX.7.021001}). In the case of hard squares (modeled in part by Definition \ref{hscsdef} below), a solid phase is characterized by a widespread 4-fold symmetric crystalline structure whereas, in liquid and gas phases, there is a lack of spatial periodicity. Now, while no sharp difference has been observed between a gas and a liquid phase, the melting process is particularly interesting as it involves in fact three phases, where a tetratic phase holding in a narrow density window separates a solid phase from an isotropic liquid phase. In this relatively brief tetratic phase, hard squares can drift but they still assemble structures with a 4-fold symmetry that decays slowly with distance.
}\end{remark}

Motivated by their subtle topological aspects, mathematicians have began to study CSGP (\cite{MR4096335, MR4640135, MR4298668, MR4749152, MR4718125,  MR2826926, MR3889263, MR4299671}). The configuration spaces we address in this paper were first studied systematically in mathematical terms by Alpert in \cite{MR4096335}.

\begin{definition}[Section 2 of \cite{MR4096335}]\label{hscsdef}
For positive integers $n,p,q$ with $n\leq pq$, $\C(n,p\times q)$ stands for the configuration space of $n$ labelled closed non-overlapping unit squares, whose sides are parallel to the canonical axes, and lie inside the rectangle $R_{p,q}:=[0,p]\times[0,q]$. We stress the fact that the non-overlapping condition \emph{does not} exclude the possibility that unit squares touch each other at their boundaries, or touch the boundary of $R_{p,q}$ ---so, in physics terminology, these are \emph{hard} squares. The $n$-th symmetric group acts freely on $\C(n,p\times q)$ by permutation of the labelled squares, and the corresponding orbit space, denoted by $\UC(n,p\times q)$, is referred to as the unlabelled configuration space of $n$ hard squares in $R_{p,q}$.
\end{definition}

Note that $\UC(n,p\times q)$ is contractible when $\min(p,q)=1$. We thus assume $p\geq q\geq2$ from now on, so that $\UC(n,p\times q)$ is path-connected (\cite[Subsection 3.2]{AGK}). As explained in the introductory section of \cite{AGK}, the fundamental group
$$
B_n(p\times q):=\pi_1(\UC(n,p\times q))
$$
can be thought of as a $(p,q)$-approximation of $B_n:=\pi_1(\UConf(\mathbb{R}^2,n))$. It has been observed that the spaces $\UC(n,p\times q)$ would in general fail to be aspherical (\cite[Theorem 3.2]{plachtaunpublished}), yet a few notable exceptions are known (Proposition \ref{asphericalspaces} below), and we focus on describing their homotopy types through the corresponding fundamental groups $B_n(p\times q)$.

\begin{proposition}[{\cite[Theorems 1.4 and 1.5]{AGK}, \cite[page 2597]{MR4640135} and \cite[Theorem 3.6]{MR4298668}}]\label{asphericalspaces} Spaces $\UC(n,p\times q)$ of the following three types are aspherical:
\begin{itemize}
\item[{\em (1)}] $\UC(n,p\times2)$ for $n\leq p$.
\item[{\em (2)}] $\UC(pq-1,p\times q)$.
\item[{\em (3)}] $\UC(pq-2,p\times q)$.
\end{itemize}
\end{proposition}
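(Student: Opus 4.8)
The three families are recorded in the cited references; the plan I would follow to reprove them uniformly is as follows. The idea is to replace each continuous space $\UC(n,p\times q)$ by a homotopy equivalent combinatorial model and then read off asphericity from the geometry of that model. Following the discretization philosophy of Abrams for graph configuration spaces, adapted to hard squares, one first shows that sliding every square into a grid-aligned canonical position deformation retracts $\UC(n,p\times q)$ onto a cube complex $K$ whose cubes record mutually compatible simultaneous slides of disjoint squares. The dimension of $K$ is then bounded by the number of vacant unit cells $pq-n$, since each independent slide consumes a distinct vacancy; this bound is the decisive feature in families (2) and (3).

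For family (2), with $n=pq-1$, there is a single vacant cell, so at most one square can be in motion at a time and $K$ carries no squares: it is a graph. A connected graph is homotopy equivalent to a wedge of circles and is therefore a $K(F,1)$ for a free group $F$, hence aspherical; counting the vertices and edges of $K$ and using that the number of circles equals $b_1(K)=1-\chi(K)$ identifies the wedge as having $(p-1)(q-1)$ circles.

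For family (1), with $q=2$ and $n\leq p$, the rigidity of moves in a height-$2$ strip forces the compatibility pattern of square slides to reproduce exactly the defining commutators of a right-angled Artin group, and one identifies $K$, up to homotopy, with the Salvetti complex of that RAAG. Since Salvetti complexes are non-positively curved cube complexes, their universal covers are CAT(0) and hence contractible, so the spaces are aspherical.

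For family (3), with $n=pq-2$, there are two vacant cells, two disjoint squares may slide simultaneously, and $K$ acquires squares while remaining at most $2$-dimensional. Asphericity then follows by verifying Gromov's link condition: each vertex link must be a flag simplicial complex, which in this $2$-dimensional setting reduces to showing that the link graphs contain no triangles, equivalently that no three slides are pairwise compatible (three such would force a $3$-cube that cannot exist with only two vacancies). This is where the real work lies. The two-vacancy combinatorics is genuinely delicate: one must enumerate the local configurations around each vertex of $K$ and rule out the degenerate interactions between the two vacancies---when they occupy adjacent cells so that a single square has two competing slides, or when the motion of one square transiently obstructs the other. Confirming that every link remains flag in these borderline cases is the crux of \cite[Theorem 3.6]{MR4298668}, and it is the step I expect to be the principal obstacle.
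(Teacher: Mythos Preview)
Your outline is broadly sound, but for families (2) and (3) you are working harder than the paper does, and in family~(3) you mislocate both the difficulty and the citation.

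The paper's route for (2) and (3) is shorter. Having identified $\UC(n,p\times q)$ with the discrete model $\UDF(\puz_{p,q},n)$ (essentially your complex $K$), the paper observes that when $n\geq pq-2$ this complex \emph{coincides on the nose} with the graph configuration space $\UDF(\Gamma_{p,q},n)$, where $\Gamma_{p,q}$ is the $1$-skeleton of $\puz_{p,q}$: any cell using a $2$-cell of $\puz_{p,q}$ as an ingredient would require the closures of its $n$ ingredients to cover at least $n+3>pq$ grid vertices. Once the model is a graph configuration space, asphericity is an immediate consequence of Abrams' theorem that $\UDF(\Gamma,n)$ is aspherical for every graph $\Gamma$; no bespoke link-condition check is needed. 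What your approach buys is self-containment; what the paper's buys is that all the CAT(0) work is already packaged in Abrams' result.

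In particular, the ``delicate two-vacancy combinatorics'' you flag as the principal obstacle in~(3) evaporates. Even if you insist on verifying Gromov's condition directly, your own parenthetical already contains the entire argument: three pairwise compatible edge-moves from a given configuration would involve three edges of $\Gamma_{p,q}$ with pairwise disjoint closures, hence three distinct vacant target vertices, which is impossible with only two vacancies. There are no borderline cases to enumerate; the adjacent-vacancy and transient-obstruction scenarios you anticipate simply do not produce triangles in any link, because compatibility of two moves in a graph configuration space is exactly disjointness of the corresponding edge closures. So the step you expect to be hardest is in fact a two-line counting argument.

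Finally, your attribution is off: \cite[Theorem~3.6]{MR4298668} treats disks in an infinite strip and is the reference behind family~(1), not family~(3). The asphericity for $n\geq pq-2$ is what the present paper extracts internally from Abrams' theorem combined with the deformation retraction of \cite{MR4640135}.
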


The homotopy type of spaces of types (1) and (2) in Proposition \ref{asphericalspaces} have recently been described (see Remark \ref{first2topologies} below). The main result in this paper is a description of the homotopy type of all spaces of type (3). Such a task is attained in Theorem \ref{resultadocasiprincipal} below through an explicit presentation of the corresponding fundamental group. The following standard constructions will be convenient for our purposes. Details can be found in \cite{MR2322545}.

\begin{definition}\label{raag-sal}
For a simple connected graph $\Gamma$, let $\Delta(\Gamma)$, $\sal(\Gamma)$ and $\raag(\Gamma)$ stand, respectively, for the flag (clique) complex, the Salvetti complex and the right-angled Artin group (RAAG) determined by $\Gamma$. Thus $\sal(\Gamma)$ is the polyhedral power $(S^1)^{\Delta(\Gamma)}$, a classifying space for $\raag(\Gamma)$.
\end{definition}

\begin{remark}\label{first2topologies}{\em
The description of the homotopy type of a space of type (2) in Proposition~\ref{asphericalspaces} is straightforward. Namely, let $N_\ell$ stand for the graph with $\ell$ vertices and no edges. Then, as shown in \cite[Theorem 1.4]{AGK}, $B_{pq-1}(p\times q)\cong\raag(N_{(p-1)(q-1)})$, the free non-abelian group of rank $(p-1)(q-1)$, so $\UC(pq-1,p\times q)\simeq\sal(N_{(p-1)(q-1)})$, the wedge of $(p-1)(q-1)$ copies of the circle $S^1$. The homotopy type of a space of type (1) in Proposition~\ref{asphericalspaces} can be given on similar grounds, though details are slightly more involved due to the presence of higher dimensional cells. Indeed, for $n\leq p$, $B_n(p\times2)\cong\raag(G_n)$ so $\UC(n,p\times2)\simeq \sal(G_n)$, where $G_n$ is the graph obtained from the complete graph on $n-1$ vertices after removing the edges of a maximal linear tree (\cite[Theorem 1.1]{AGK}). In particular, the homotopy dimension of $\UC_n(p\times2)$ is $\hdim(\UC_n(p\times2))=\lfloor n/2\rfloor$.
}\end{remark}

%\begin{figure}[h!]
%        \centering
%        \includegraphics[width=0.35\textwidth]{Bk}
%\caption{The bipartite graph $E_k$.}
%        \label{Ek}
%    \end{figure}

\begin{figure}[h!]
        \centering
$$\begin{tikzpicture}[x=.6cm,y=.6cm]
\draw[ultra thin](0,0)--(0,1.5);\draw[ultra thin](0,0)--(2,1.5);
\draw[ultra thin](0,0)--(7,1.5);\draw[ultra thin](0,0)--(9,1.5);
\draw[thin](2,0)--(2,1.5);\draw[thin](2,0)--(7,1.5);\draw[thin](2,0)--(9,1.5);
\draw[thick](7,0)--(7,1.5);\draw[thick](7,0)--(9,1.5);
\draw[very thick](9,0)--(9,1.5);
\node at (0,0) {\scriptsize$\bullet$};\node at (2,0) {\scriptsize$\bullet$};
\node at (7,0) {\scriptsize$\bullet$};\node at (9,0) {\scriptsize$\bullet$};
\node at (0,1.5) {\scriptsize$\bullet$};\node at (2,1.5) {\scriptsize$\bullet$};
\node at (7,1.5) {\scriptsize$\bullet$};\node at (9,1.5) {\scriptsize$\bullet$};
\node[below] at (0,0) {$v_k$};\node[below] at (2,0) {$v_{k-1}$};
\node[below] at (7,0) {$v_2$};\node[below] at (9,0) {$v_1$};
\node[above] at (0,1.5) {$u_1$};\node[above] at (2,1.5) {$u_2$};
\node[above] at (7,1.5) {$u_{k-1}$};\node[above] at (9,1.5) {$u_k$};
\node at (4.5,0) {$\cdots$};\node at (4.5,1.5) {$\cdots$};
\end{tikzpicture}$$
\caption{The bipartite graph $E_k$. Edge thickness suggests ``$v$-families'' of edges. Since $u_i$ and $v_i$ are vertices of degree $i$, these families are dually reorganized by turning $E_k$ upside down.}
        \label{Ek}
    \end{figure}
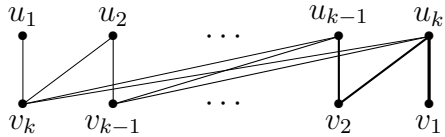

\begin{remark}\label{q2case}{\em
The homotopy type of a space of type (3) in Proposition \ref{asphericalspaces} is known for $q=2$ (\cite[Theorem 1.5]{AGK}). We review the answer for comparison purposes. For a non-negative integer $k$, let $E_k$ stand for the bipartite graph depicted in Figure~\ref{Ek} with vertices $u_i$ and $v_i$, for $1\leq i\leq k$, and an edge $e_{i,j}$ joining $u_i$ and $v_j$ whenever $i+j>k$. Additionally, for a graph $\Gamma$, let $k+\Gamma$ stand for the graph obtained by adding $k$ isolated vertices to $\Gamma$. For instance, $E_0=\varnothing$ and $0+\Gamma=\Gamma$. Then $B_2(2\times2)\cong\raag(N_1) =\mathbb{Z}$, so $\UC(2,2\times2)\simeq\sal(N_1)=S^1$ while, for $p\geq3$, $B_{2p-2}(p\times2)\simeq\raag(3+E_{p-3})$. In particular $\hdim(\UC(2p-2,p\times2))=2$ with $\UC(2p-2, p\times2)$ a union of 2-torii. For instance $\UC(6,4\times2)\simeq(S^1\times S^1)\vee \UC(4,3\times2)$ and $\UC(4,3\times2)\simeq \bigvee_3S^1$.
}\end{remark}

\begin{remark}\label{tetraticphase}{\em
As shown in \cite[Theorem 1.1]{AGK}, the isomorphism $B_n(p\times2)\cong\raag(G_n)$ holds true in the extended range $n\leq2p-5$ (though asphericity of $\UC(n,p\times2)$ is not asserted in the extended range). This leaves us with a situation that matches, at the fundamental-group level, the lack of sharp differences between liquid and gas phases, as well as the existence of a brief tetratic phase (Remark~\ref{melting}). Indeed, all spaces $\UC(n,p\times2)$ with $n\leq2p-5$ are $\pi_1$-isomorphic, and are thus said to have a \emph{($\pi_1$-indistinguishable) liquid-gas topology.} The few remaining spaces $\UC(n,p\times2)$ with $2p-4\leq n\leq2p-1$ are then said to have a \emph{($\pi_1$-indistinguishable) tetratic topology.} As reviewed above, the first two tetratic topologies, namely the cases with $n=2p-1$ and $n=2p-2$, are encoded by the graphs $N_{p-1}$ and $3+E_{p-3}$, respectively, through RAAG-Sal constructions. The point here is that, while Remark~\ref{first2topologies} says that the description of $B_{2p-1}(p\times2)$ extends in a straightforward way to $B_{pq-1}(p\times q)$, our main results (Theorems~\ref{resultadocasiprincipal} and \ref{resultadoprincipal} below) show that the $q\geq3$ case of the second tetratic topology (i.e.~spaces of type (3) in Proposition~\ref{asphericalspaces}) retains only a subtle amount of the RAAG features we have reviewed in this introductory section. Details are spelled out in the next section.
}\end{remark}

\section{Main results}
\begin{definition}
A group is said to be \emph{simple commutator-related} provided it admits a finite presentation whose relators are commutators $[a,b]:=aba^{-1}b^{-1}$ of elements $a$ and $b$ ---neither of which is assumed to be a generator in the given presentation. Such a presentation is said to be a \emph{simple commutator-related structure} of $G$.
\end{definition}

Note that a RAAG is nothing but a group with a simple commutator-related structure whose relators are commutators of actual generators in the given structure.

\begin{theorem}\label{resultadocasiprincipal}
For $p,q\geq3$, the aspherical space $\UC(pq-2,p\times q)$ has torsion-free homology and homotopy dimension given by
$$
\hdim(\UC(pq-2,p\times q))=\begin{cases}
1, & p=q=3; \\
2, & \text{otherwise.}
\end{cases}
$$
The fundamental group $B_{pq-2}(p\times q)$ has a simple commutator-related structure with $\beta_1$ generators and $\beta_2$ relators ---we stress that all relators are commutators--- where $\beta_1:=(p-1)(q-1)+1$ and
$$
\beta_2:=\frac{(p^2+1)(q^2+1)-pq(2p+2q+3)+7(p+q-1)}2
$$
are, respectively, the first and second Betti numbers of $\UC(pq-2,p\times q)$.
\end{theorem}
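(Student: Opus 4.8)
The plan is to replace $\UC(pq-2,p\times q)$ by an explicit finite cube complex and then extract, and minimize, a group presentation from it. First I would work with a discretized cube-complex model $X$ for $\UC(pq-2,p\times q)$ of the kind used to establish Proposition \ref{asphericalspaces}: the vertices are the integral (grid-aligned) packings leaving two unit cells vacant, the edges record the elementary move of sliding a single square into an adjacent vacancy, and the square $2$-cells record two such moves performed on disjoint squares. The decisive structural observation is that, with only two vacant cells, at most two squares can ever be in simultaneous motion, so $X$ has dimension at most $2$; together with the asphericity granted by Proposition \ref{asphericalspaces} this already yields $\hdim(\UC(pq-2,p\times q))\leq 2$, with the value $1$ arising exactly when the square $2$-cells all cancel homologically, which I expect to happen only for $p=q=3$.

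Next I would pass from $X$ to a presentation of $B_{pq-2}(p\times q)=\pi_1(X)$ in the standard way, collapsing a maximal tree in the $1$-skeleton so that the off-tree edges become generators and the square $2$-cells become relators. Each such relator is read off from a square and therefore has the ``geometric'' commutator shape $ab a'^{-1} b'^{-1}$ coming from a pair of parallel edge-classes; the catch is that, after the tree collapse, parallel edges of a square need not represent the same generator, so these words are not yet honest commutators. The heart of the argument, and the step I expect to be the main obstacle, is a bookkeeping-heavy sequence of Tietze transformations, organized uniformly in $p$ and $q$, that simultaneously eliminates the redundant generators and rewrites every surviving relator as a genuine commutator $[A,B]=ABA^{-1}B^{-1}$ of (possibly non-generator) words $A,B$. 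This is precisely the place where the present case departs from the RAAG descriptions of the type-(1) and type-(2) spaces recalled in Remarks \ref{first2topologies} and \ref{q2case}: the two interacting vacancies force relators that are commutators of words rather than of generators, so the outcome is a \emph{simple commutator-related} structure rather than a RAAG presentation.

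Once such a presentation is in hand, the homological claims follow almost formally. Because every relator is a commutator it abelianizes to zero, so in the cellular chain complex of the corresponding presentation $2$-complex $K$ (one $0$-cell, the generators as $1$-cells, the relators as $2$-cells) both boundary maps vanish; hence $H_0(K)=\mathbb{Z}$, $H_1(K)=\mathbb{Z}^{g}$ and $H_2(K)=\mathbb{Z}^{r}$, where $g$ and $r$ are the numbers of generators and relators, and all three groups are torsion-free. Since $K$ is obtained from $X$ by a tree collapse followed by Tietze transformations, each of which preserves homotopy type, $K$ is homotopy equivalent to the aspherical space $\UC(pq-2,p\times q)$, so these computations compute the homology of the configuration space itself; in particular the homology is torsion-free, $g=\beta_1$ and $r=\beta_2$, which also forces the presentation to be minimal, since no fewer than $\beta_1$ generators are possible once $H_1$ has rank $\beta_1$, and the relator count is then pinned down by $H_2$.

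It remains to identify the two counts with the stated closed forms. For $\beta_1$ I would compute $H_1$ of the two-vacancy model directly and compare with the single-vacancy case of Remark \ref{first2topologies}: one vacancy contributes the rank $(p-1)(q-1)$ already seen for $\UC(pq-1,p\times q)$, while the interaction of the second vacancy contributes exactly one further independent loop, giving $\beta_1=(p-1)(q-1)+1$. For $\beta_2$ I would exploit the invariance of the Euler characteristic under the collapse, writing $\chi(\UC(pq-2,p\times q))=1-\beta_1+\beta_2$ and evaluating $\chi$ as the alternating count of the cells of $X$ (integral packings, single moves, and commuting move-pairs); this reduces $\beta_2$ to a finite binomial enumeration over the positions of the two vacancies, which I would simplify algebraically to the displayed polynomial. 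As a consistency check, this enumeration must return $\beta_2=0$ precisely when $p=q=3$, recovering $\hdim=1$ and identifying $B_7(3\times 3)$ with the free group of rank $5$.
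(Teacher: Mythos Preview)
Your strategy is essentially the paper's: pass to the discrete cube-complex model (the paper uses $\UDF(\Gamma_{p,q},pq-2)$ together with the Farley--Sabalka Morse-theoretic presentation of Proposition~\ref{generators} as its starting point rather than the raw complex), observe it is at most $2$-dimensional when only two cells are vacant, and then run an explicit, uniformly-organized sequence of Tietze moves (Section~\ref{structure}) to reach a presentation whose relators are all commutators.

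The gap is in your homological step. You assert that the final presentation complex $K$ is homotopy equivalent to $\UC(pq-2,p\times q)$ because ``Tietze transformations \ldots\ preserve homotopy type.'' That is false as a general statement: adding or deleting a relator that is a consequence of the others wedges or unwedges an $S^2$, so $H_2(K)$ need not equal $H_2(\UC)$ and you cannot simply read $\beta_2$ off as the relator count. One repair is to verify that every move you actually perform is of the homotopy-preserving kind (eliminate a generator via a relator that expresses it in terms of the others, or replace a relator $r_i$ by $r_i r_j^{\pm1}$ or a conjugate); this does hold for the reduction in Section~\ref{structure}, but it must be checked, not assumed. The paper sidesteps the issue entirely: it obtains $\beta_1$ from Hurewicz and the abelianization (immediate once all relators are commutators), notes that $H_2$ is torsion-free because the \emph{original} model is $2$-dimensional, computes $\chi(\UC)$ from the Morse critical-cell count of Lemma~\ref{conteocritico}, solves $\chi=1-\beta_1+\beta_2$ for $\beta_2$, and then verifies arithmetically (Example~\ref{finq3} and Table~\ref{conteodirecto}) that this matches the number of relators in the reduced presentation. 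Relatedly, your heuristic ``one vacancy contributes $(p-1)(q-1)$, the second contributes one more'' for $\beta_1$ is suggestive but not a proof; the actual argument is simply that the final presentation has $(p-1)(q-1)+1$ generators and commutator relators abelianize to zero.
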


Generators and relators for the presentation of $B_{pq-2}(p\times q)$ in Theorem \ref{resultadocasiprincipal} are spelled out in Section \ref{structure}. Although technical, the presentation is as efficient as possible.

\begin{corollary}\label{eficiencia}
The presentation of $B_{pq-2}(p\times q)$ in Theorem \ref{resultadocasiprincipal} has the minimal possible number of generators and relators. Explicitly, any finite presentation (whether or not of simple commutator-related type) of $B_{pq-2}(p\times q)$ must have at least $\beta_1$ generators and $\beta_2$ relators.
\end{corollary}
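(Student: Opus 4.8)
The plan is to derive both lower bounds directly from Theorem~\ref{resultadocasiprincipal}, which tells us that $X:=\UC(pq-2,p\times q)$ is a $K(G,1)$ for $G:=B_{pq-2}(p\times q)$, with torsion-free homology and Betti numbers $\beta_1$ and $\beta_2$. The two assertions --- at least $\beta_1$ generators and at least $\beta_2$ relators --- will be handled separately: the first by abelianizing, the second by an Euler-characteristic count performed on the presentation complex of an \emph{arbitrary} finite presentation. This presentation-independence is exactly what is needed so that the bound applies to all presentations, not just those of simple commutator-related type.

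First I would treat the generator bound. Since $X$ is a $K(G,1)$ with torsion-free homology, $H_1(G;\mathbb{Z})\cong G^{\mathrm{ab}}\cong\mathbb{Z}^{\beta_1}$. The image of any generating set of $G$ generates $G^{\mathrm{ab}}$, and $\mathbb{Z}^{\beta_1}$ cannot be generated by fewer than $\beta_1$ elements; hence every finite generating set of $G$, and in particular the generating set of any finite presentation, has at least $\beta_1$ members.

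Next I would treat the relator bound. Given a finite presentation $\mathcal{P}=\langle x_1,\dots,x_g\mid r_1,\dots,r_s\rangle$ of $G$, form its presentation $2$-complex $K$, with a single $0$-cell, $g$ $1$-cells and $s$ $2$-cells, so $\chi(K)=1-g+s$. Because $K$ has one vertex the cellular boundary $\partial_1$ vanishes, so $H_1(K;\mathbb{Z})\cong G^{\mathrm{ab}}\cong\mathbb{Z}^{\beta_1}$ and $H_2(K;\mathbb{Z})=\ker\partial_2$ is free abelian; comparing with $\chi(K)=1-\operatorname{rk}H_1(K)+\operatorname{rk}H_2(K)$ yields the presentation-independent identity $s-g=\operatorname{rk}H_2(K)-\beta_1$. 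Now $\pi_1(K)\cong G$, and since $X$ is aspherical one obtains a $K(G,1)$ from $K$ by attaching cells of dimension $\geq3$; these leave the group of $2$-chains unchanged, so the induced map $H_2(K;\mathbb{Z})\twoheadrightarrow H_2(X;\mathbb{Z})=H_2(G;\mathbb{Z})$ is surjective and $\operatorname{rk}H_2(K)\geq\beta_2$. Combining this with the generator bound gives $s=g+\operatorname{rk}H_2(K)-\beta_1\geq g+\beta_2-\beta_1\geq\beta_2$.

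Both bounds are realized by the presentation of Theorem~\ref{resultadocasiprincipal}, which uses exactly $\beta_1$ generators and $\beta_2$ relators, so that presentation is simultaneously minimal in both counts. I expect no serious obstacle here: the corollary is essentially formal once Theorem~\ref{resultadocasiprincipal} is in hand, and its real content --- exhibiting a presentation that meets the bounds --- lives in that theorem. The only place the theorem is used in full strength is the torsion-freeness of the homology, which is what forces $G^{\mathrm{ab}}$ to be precisely $\mathbb{Z}^{\beta_1}$ (with torsion present the generator count could only rise) and what keeps the Euler-characteristic estimate tight. The one point demanding care is to invoke the surjection $H_2(K)\twoheadrightarrow H_2(G)$ in its presentation-independent form, so that the conclusion genuinely holds for every finite presentation of $B_{pq-2}(p\times q)$.
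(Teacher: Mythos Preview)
Your argument is correct and is essentially the same as the paper's: the paper simply cites the Hurewicz theorem in dimension~1 for the generator bound and Hopf's formula for the relator bound, both under the torsion-free hypothesis from Theorem~\ref{resultadocasiprincipal}. Your Euler-characteristic computation on the presentation complex together with the surjection $H_2(K)\twoheadrightarrow H_2(G)$ is exactly the content of the Hopf-formula step, just spelled out rather than quoted.
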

\begin{proof}
Given the torsion-free assertion in Theorem \ref{resultadocasiprincipal}, this is a standard consequence of the Hurewicz theorem in dimension 1 and of Hopf's formula (see Exercise 5 at the bottom page 46 of \cite{MR672956}).
\end{proof}

Tietze transformations can be applied to a given simple commutator-related group in the hope of identifying a new basis in terms of which a RAAG structure arises. This is what happens in all instances reviewed in Remark \ref{q2case}, as well as in a few additional instances with $\min(p,q)=3$ of Theorem \ref{resultadocasiprincipal} (see Examples \ref{stillraags} below). Although the idea fails to identify a RAAG structure in $B_{pq-2}(p\times q)$ for general $p,q\geq3$, it leads us to a description (Theorem~\ref{resultadoprincipal} below) of $B_{pq-2}(p\times q)$, for $\min(p,q)=3$, as an HNN extension of the RAAG associated to a (literally) squared version of the graph in Figure~\ref{Ek}. Before spelling out the answer, it is convenient to illustrate the situation in a few exceptional cases. In what follows we write $G=F_k$ to mean that $G$ is a free non-abelian group of rank $k$, while the more specific notation $G=F(x_1,\ldots, x_k)$ is used to indicate a set of generators.

\begin{examples}\label{stillraags}{\em
Recall from Remark \ref{q2case} the isomorphism $B_{2p-2}(p\times2)\cong\raag\left(3+E_{p-3}\right)$, valid for $p\geq3$. Concerning $B_{3p-2}(p\times3)$, the methods in this paper yield:
\begin{enumerate}
\item For $p=3$, $B_7(3\times3)\cong\raag(5)=F_5$, while $B_4(3\times2)\cong\raag(3)=F_3$.
\item For $p=4$, $B_{10}(4\times3)\cong\raag\left(\raisebox{-2.4mm}{$\hspace{-1mm}\begin{tikzpicture}[x=.7cm,y=.7cm]
\draw(0,0)--(0,.5)--(.5,.5)--(.5,0)--(0,0);
\node at (0,0) {\tiny$\bullet$};\node at (.5,0) {\tiny$\bullet$};
\node at (0,.5) {\tiny$\bullet$};\node at (.5,.5) {\tiny$\bullet$};
\end{tikzpicture}$}+\,3\right)$, while $B_6(4\times2)=\raag\left(\raisebox{-2.4mm}{$\hspace{-.6mm}\begin{tikzpicture}[x=.7cm,y=.7cm]
\draw(0,0)--(0,.5);
\node at (0,0) {\tiny$\bullet$};\node at (0,.5) {\tiny$\bullet$};
\end{tikzpicture}$}+\,3\right)$.
\item For $p=5$, $B_{13}(5\times3)\cong\raag\left(\raisebox{-4.5mm}{$\hspace{-1mm}\begin{tikzpicture}[x=1.2cm,y=1.2cm]
\draw(0+.06,0-.06)--(0+.06,.5+.06)--(0-.06,0+.06)--(0-.06,.5-.06);
\draw(.5+.06,0+.06)--(.5+.06,.5-.06)--(.5-.06,0-.06)--(.5-.06,.5+.06);
\draw(.5-.06,.5+.06)--(0+.06,.5+.06)--(.5+.06,.5-.06)--(0-.06,.5-.06);
\draw(.5+.06,0+.06)--(0-.06,0+.06)--(.5-.06,0-.06)--(0+.06,0-.06);
\node at (0+.06,0-.06) {\tiny$\bullet$};\node at (0+.06,.5+.06) {\tiny$\bullet$};
\node at (0-.06,0+.06) {\tiny$\bullet$};\node at (0-.06,.5-.06) {\tiny$\bullet$};
\node at (.5+.06,0+.06) {\tiny$\bullet$};\node at (.5+.06,.5-.06) {\tiny$\bullet$};
\node at (.5-.06,0-.06) {\tiny$\bullet$};\node at (.5-.06,.5+.06) {\tiny$\bullet$};
\end{tikzpicture}$}+\,1\right)$, while $B_8(5\times2)=\raag\left(\raisebox{-4.2mm}{$\hspace{-1mm}\begin{tikzpicture}[x=1cm,y=1cm]
\draw(0+.06,0-.06)--(0+.06,.5+.06)--(0-.06,0+.06)--(0-.06,.5-.06);
\node at (0+.06,0-.06) {\tiny$\bullet$};\node at (0+.06,.5+.06) {\tiny$\bullet$};
\node at (0-.06,0+.06) {\tiny$\bullet$};\node at (0-.06,.5-.06) {\tiny$\bullet$};
\end{tikzpicture}$}\hspace{-.6mm}+\,3\right)$.
\end{enumerate}
}\end{examples}

Item 1 is just too small, but items 2 and 3 already suggest the key pattern. Namely, if we think of the graph $E_{p-3}$ in the case $q=2$ as a ``$p$-meta-edge'', then the graph relevant for $q=3$ would be a ``$p$-meta-square''. Yet, some subtleties are still missing in Examples \ref{stillraags}. Namely, there is indeed a generator $v_p$ of $B_{3p-2}(p\times3)$ corresponding to the copy of $F_1$ that splits off from $B_{13}(5\times3)$. However, for $p\geq6$, this generator turns out to be related to several of the remaining generators. So, it is more illuminating to write item 3 in Examples \ref{stillraags} in the form
$$
B_{13}(5\times3)\cong\raag\left(\raisebox{-4.5mm}{$\hspace{-1mm}\begin{tikzpicture}[x=1.2cm,y=1.2cm]
\draw(0+.06,0-.06)--(0+.06,.5+.06)--(0-.06,0+.06)--(0-.06,.5-.06);
\draw(.5+.06,0+.06)--(.5+.06,.5-.06)--(.5-.06,0-.06)--(.5-.06,.5+.06);
\draw(.5-.06,.5+.06)--(0+.06,.5+.06)--(.5+.06,.5-.06)--(0-.06,.5-.06);
\draw(.5+.06,0+.06)--(0-.06,0+.06)--(.5-.06,0-.06)--(0+.06,0-.06);
\node at (0+.06,0-.06) {\tiny$\bullet$};\node at (0+.06,.5+.06) {\tiny$\bullet$};
\node at (0-.06,0+.06) {\tiny$\bullet$};\node at (0-.06,.5-.06) {\tiny$\bullet$};
\node at (.5+.06,0+.06) {\tiny$\bullet$};\node at (.5+.06,.5-.06) {\tiny$\bullet$};
\node at (.5-.06,0-.06) {\tiny$\bullet$};\node at (.5-.06,.5+.06) {\tiny$\bullet$};
\end{tikzpicture}$}\right)\star \varphi_5\hspace{.3mm},
$$
namely, as the (trivial) HNN extension of a RAAG resulting from adding a stable letter $v_5$ that conjugates the trivial subgroup $1$ to itself via $\varphi_5\colon 1\to1$. Cases $p=6$ in Figure~\ref{p6} and $p=7$ in Figure~\ref{p7} preserve this form, except that the subgroups conjugated by $v_p$ are no longer trivial but free. All cases $p\geq8$ have fully general characteristics. The case $p=8$ is depicted in Figure \ref{p8}.

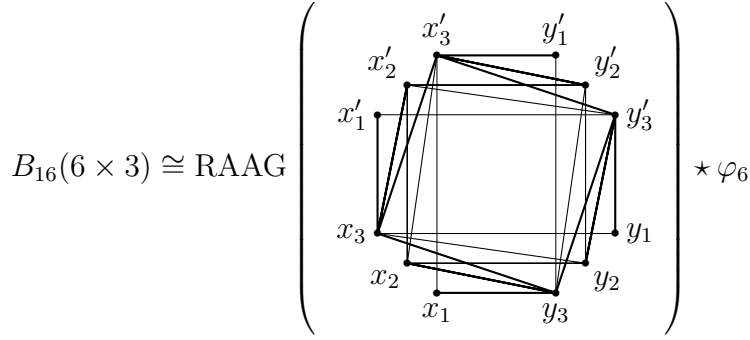
\begin{figure}[h!]
\centering
$B_{16}(6\times3)\cong\raag\left(\raisebox{-2.1cm}{
\begin{tikzpicture}[x=.79cm,y=.79cm]
\draw[thick](-.5,2.5)--(-.5,.5)--(0,3)--(-.5,.5)--(.5,3.5);
\draw(0,0)--(0,3)--(0,0)--(.5,3.5);
\draw[ultra thin](.5,-.5)--(.5,3.5);
\draw[thick](2.5,3.5)--(.5,3.5)--(3,3)--(.5,3.5)--(3.5,2.5);
\draw(0,3)--(3,3)--(0,3)--(3.5,2.5);
\draw[ultra thin](-.5,2.5)--(3.5,2.5);
\draw[thick](3.5,.5)--(3.5,2.5)--(3,0)--(3.5,2.5)--(2.5,-.5);
\draw(3,3)--(3,0)--(3,3)--(2.5,-.5);
\draw[ultra thin](2.5,3.5)--(2.5,-.5);
\draw[thick](.5,-.5)--(2.5,-.5)--(0,0)--(2.5,-.5)--(-.5,.5);
\draw(3,0)--(0,0)--(3,0)--(-.5,.5);
\draw[ultra thin](3.5,.5)--(-.5,.5);
\node at (0,0) {\tiny$\bullet$};\node at (0,3) {\tiny$\bullet$};
\node at (3,0) {\tiny$\bullet$};\node at (3,3) {\tiny$\bullet$};
\node at (0+.5,0-.5) {\tiny$\bullet$};\node at (0-.5,0+.5) {\tiny$\bullet$};
\node at (0+.5,3+.5) {\tiny$\bullet$};\node at (0-.5,3-.5) {\tiny$\bullet$};
\node at (3+.5,0+.5) {\tiny$\bullet$};\node at (3-.5,0-.5) {\tiny$\bullet$};
\node at (3+.5,3-.5) {\tiny$\bullet$};\node at (3-.5,3+.5) {\tiny$\bullet$};
\node[below] at (.5,-.5) {$x_1$};\node[below left] at (0.07,0.07) {$x_2$};\node[left] at (0-.5,0+.5) {$x_3$};
\node[left] at (-.485,2.5) {$x'_1$};\node[above left] at (0.02,2.94) {$x'_2$};\node[above] at (.5,3.5) {$x'_3$};
\node[below] at (2.5,-.5) {$y_3$};\node[below right] at (2.95,.05) {$y_2$};\node[right] at (3.5,.5) {$y_1$};
\node[above] at (2.5,3.5) {$y'_1$};\node[above right] at (2.96,2.94) {$y'_2$};\node[right] at (3.5,2.5) {$y'_3$};
\end{tikzpicture}}
\right)\star\varphi_6$
\caption{$B_{16}(6\times3)$ is the HNN extension of the indicated ``square-type'' RAAG with respect to the isomorphism $\varphi_6\colon F(x_1,x'_1)\to F(y_1,y'_1)$ given by $\varphi_6(x_1)=y_1$ and $\varphi_6(x'_1)=y'_1$. So, the relations $v_6x_1v_6^{-1}=y_1$ and $v_6x'_1v_6^{-1}=y'_1$ hold in $B_{16}(6\times3)$. Note that the subgroup generated by $x_1$ and $x'_1$ (by $y_1$ and $y'_1$, respectively) is free, as the subgraph generated by these two vertices has no edges.}
        \label{p6}
    \end{figure}

\begin{figure}
\centering
$B_{19}(7\times3)\cong\raag\left(\raisebox{-2.75cm}{
\begin{tikzpicture}[x=.79cm,y=.79cm]
\node at (0+.25,0-.25) {\tiny$\bullet$};
\node at (0+.75,0-.75) {\tiny$\bullet$};
\node at (0-.25,0+.25) {\tiny$\bullet$};
\node at (0-.75,0+.75) {\tiny$\bullet$};
\node at (0-.25,4-.25) {\tiny$\bullet$};
\node at (0-.75,4-.75) {\tiny$\bullet$};
\node at (0+.25,4+.25) {\tiny$\bullet$};
\node at (0+.75,4+.75) {\tiny$\bullet$};
\node at (4+.25,4-.25) {\tiny$\bullet$};
\node at (4+.75,4-.75) {\tiny$\bullet$};
\node at (4-.25,4+.25) {\tiny$\bullet$};
\node at (4-.75,4+.75) {\tiny$\bullet$};
\node at (4-.25,0-.25) {\tiny$\bullet$};
\node at (4-.75,0-.75) {\tiny$\bullet$};
\node at (4+.25,0+.25) {\tiny$\bullet$};
\node at (4+.75,0+.75) {\tiny$\bullet$};
\draw(0-.75,.75)--(0-.75,4-.75);
\draw(0-.75,.75)--(0-.25,4-.25);
\draw(0-.75,.75)--(0+.25,4+.25);
\draw(0-.75,.75)--(0+.75,4+.75);
\draw(0-.25,.25)--(0-.25,4-.25);
\draw(0-.25,.25)--(0+.25,4+.25);
\draw(0-.25,.25)--(0+.75,4+.75);
\draw(0+.25,-.25)--(0+.25,4+.25);
\draw(0+.25,-.25)--(0+.75,4+.75);
\draw(0+.75,-.75)--(0+.75,4+.75);
\draw(.75,4.75)--(4-.75,4+.75);
\draw(.75,4.75)--(4-.25,4+.25);
\draw(.75,4.75)--(4+.25,4-.25);
\draw(.75,4.75)--(4+.75,4-.75);
\draw(.25,4.25)--(4-.25,4+.25);
\draw(.25,4.25)--(4+.25,4-.25);
\draw(.25,4.25)--(4+.75,4-.75);
\draw(-.25,4-.25)--(4+.25,4-.25);
\draw(-.25,4-.25)--(4+.75,4-.75);
\draw(-.75,4-.75)--(4+.75,4-.75);
\draw(4.75,4-.75)--(4-.75,-.75);
\draw(4.75,4-.75)--(4-.25,-.25);
\draw(4.75,4-.75)--(4+.25,.25);
\draw(4.75,4-.75)--(4+.75,.75);
\draw(4.25,4-.25)--(4+.25,.25);
\draw(4.25,4-.25)--(4-.25,-.25);
\draw(4.25,4-.25)--(4-.75,-.75);
\draw(4-.25,4-.25)--(4-.25,-.25);
\draw(4-.25,4+.25)--(4-.75,-.75);
\draw(4-.75,4+.75)--(4-.75,-.75);
\draw(4-.75,-.75)--(+.75,-.75);
\draw(4-.75,-.75)--(+.25,-.25);
\draw(4-.75,-.75)--(-.25,.25);
\draw(4-.75,-.75)--(-.75,.75);
\draw(4-.25,-.25)--(+.25,-.25);
\draw(4-.25,-.25)--(-.25,.25);
\draw(4-.25,-.25)--(-.75,.75);
\draw(4.25,.25)--(-.25,.25);
\draw(4.25,.25)--(-.75,.75);
\draw(4.75,.75)--(-.75,.75);
\node[below left] at (.75,-.75) {$x_1$};
\node[below left] at (.25,-.25) {$x_2$};
\node[below left] at (-.25,.25) {$x_3$};
\node[below left] at (-.75,.75) {$x_4$};
\node[above left] at (-.75,4-.75) {$x'_1$};
\node[above left] at (-.25,4-.25) {$x'_2$};
\node[above left] at (.25,4.25) {$x'_3$};
\node[above left] at (.75,4.75) {$x'_4$};
\node[above right] at (4-.75,4+.75) {$y'_1$};
\node[above right] at (4-.25,4+.25) {$y'_2$};
\node[above right] at (4.25,4-.25) {$y'_3$};
\node[above right] at (4.75,4-.75) {$y'_4$};
\node[below right] at (4+.75,.75) {$y_1$};
\node[below right] at (4+.25,.25) {$y_2$};
\node[below right] at (4-.25,-.25) {$y_3$};
\node[below right] at (4-.75,-.75) {$y_4$};
\end{tikzpicture}}
\right)\star\varphi_7$
\caption{The first subgroup under conjugation is generated by $x_1$, $x_2$, $x'_1$ and $x'_2$, while the second one is generated by $y'_1$, $y'_2$, $y_1$ and $y_2$. They are again free due to the lack of edges between these vertices. The isomorphism $\varphi_7\colon F(x_1,x_2,x'_1,x'_2)\to F(y_1,y_2,y'_1,y'_2)$ is now given by $\varphi_7(x_i)=y_{3-i}$ and $\varphi_7(x'_i)=y'_{3-i}$, for $i=1,2$, so the stable generator $v_7$ is related to the rest of the generators via the relations $v_7x_iv_7^{-1}=y_{3-i}$ and $v_7x'_iv_7^{-1}=y'_{3-i}$ $(1\leq i\leq2$).}
\label{p7}
\end{figure}
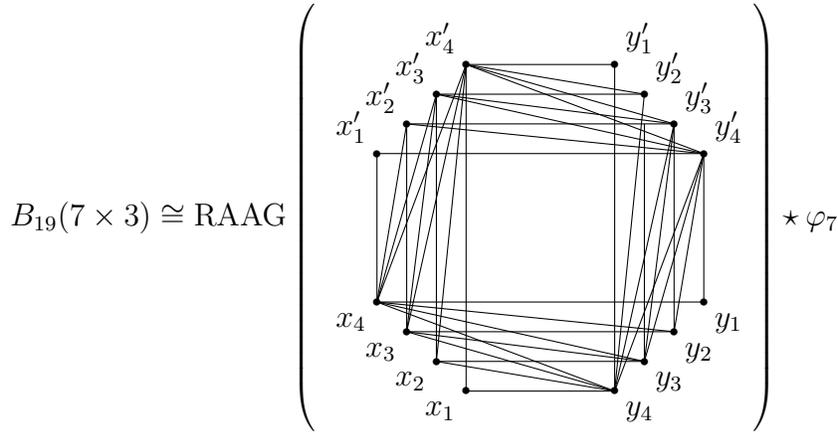

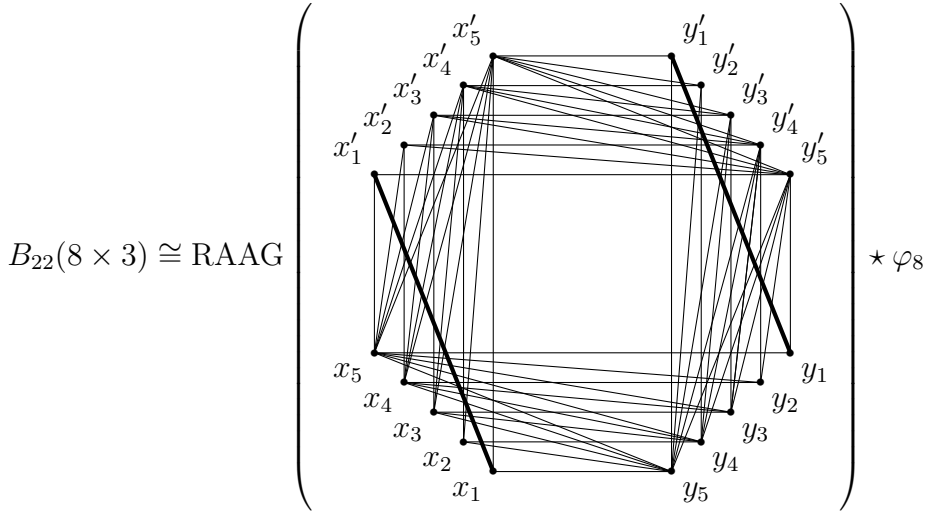
\begin{figure}
\centering
$B_{22}(8\times3)\cong\raag\left(
\raisebox{-3.3cm}{
\begin{tikzpicture}[x=.79cm,y=.79cm]
\node at (0,0) {\tiny$\bullet$};
\node at (0+.5,0-.5) {\tiny$\bullet$};
\node at (0+1,0-1) {\tiny$\bullet$};
\node at (0-.5,0+.5) {\tiny$\bullet$};
\node at (0-1,0+1) {\tiny$\bullet$};
\node at (0,5) {\tiny$\bullet$};
\node at (0+.5,5+.5) {\tiny$\bullet$};
\node at (0+1,5+1) {\tiny$\bullet$};
\node at (0-.5,5-.5) {\tiny$\bullet$};
\node at (0-1,5-1) {\tiny$\bullet$};
\node at (5,0) {\tiny$\bullet$};
\node at (5-.5,0-.5) {\tiny$\bullet$};
\node at (5-1,0-1) {\tiny$\bullet$};
\node at (5+.5,0+.5) {\tiny$\bullet$};
\node at (5+1,0+1) {\tiny$\bullet$};
\node at (5,5) {\tiny$\bullet$};
\node at (5+.5,5-.5) {\tiny$\bullet$};
\node at (5+1,5-1) {\tiny$\bullet$};
\node at (5-.5,5.5) {\tiny$\bullet$};
\node at (5-1,5+1) {\tiny$\bullet$};
\draw(0-1,0+1)--(0-1,5-1);
\draw(0-1,0+1)--(0-.5,5-.5);
\draw(0-1,0+1)--(0,5);
\draw(0-1,0+1)--(0.5,5.5);
\draw(0-1,0+1)--(1,6);
\draw(0-.5,0+.5)--(0-.5,5-.5);
\draw(0-.5,0+.5)--(0,5);
\draw(0-.5,0+.5)--(0.5,5.5);
\draw(0-.5,0+.5)--(1,6);
\draw(0,0)--(0,5);
\draw(0,0)--(0.5,5.5);
\draw(0,0)--(1,6);
\draw(0.5,-0.5)--(0.5,5.5);
\draw(0.5,-0.5)--(1,6);
\draw(1,-1)--(1,6);
\draw(1,6)--(4,6);
\draw(1,6)--(4.5,5.5);
\draw(1,6)--(5,5);
\draw(1,6)--(5.5,4.5);
\draw(1,6)--(6,4);
\draw(.5,5.5)--(4.5,5.5);
\draw(.5,5.5)--(5,5);
\draw(.5,5.5)--(5.5,4.5);
\draw(.5,5.5)--(6,4);
\draw(0,5)--(5,5);
\draw(0,5)--(5.5,4.5);
\draw(0,5)--(6,4);
\draw(0-.5,4.5)--(5.5,4.5);
\draw(0-.5,4.5)--(6,4);
\draw(-1,4)--(6,4);
\draw(6,4)--(6,1);
\draw(6,4)--(5.5,.5);
\draw(6,4)--(5,0);
\draw(6,4)--(4.5,-.5);
\draw(6,4)--(4,-1);
\draw(5.5,4.5)--(5.5,.5);
\draw(5.5,4.5)--(5,0);
\draw(5.5,4.5)--(4.5,-.5);
\draw(5.5,4.5)--(4,-1);
\draw(5.5,4.5)--(5,0);
\draw(5,5)--(5,0);
\draw(5,5)--(4.5,-.5);
\draw(5,5)--(4,-1);
\draw(4.5,5.5)--(4.5,-.5);
\draw(4.5,5.5)--(4,-1);
\draw(4,6)--(4,-1);
\draw(4,-1)--(1,-1);
\draw(4,-1)--(.5,-.5);
\draw(4,-1)--(0,0);
\draw(4,-1)--(-.5,.5);
\draw(4,-1)--(-1,1);
\draw(4.5,-.5)--(.5,-.5);
\draw(4.5,-.5)--(0,0);
\draw(4.5,-.5)--(-.5,.5);
\draw(4.5,-.5)--(-1,1);
\draw(5,0)--(0,0);
\draw(5,0)--(-.5,.5);
\draw(5,0)--(-1,1);
\draw(5.5,0.5)--(-.5,.5);
\draw(5.5,0.5)--(-1,1);
\draw(6,1)--(-1,1);
\draw[ultra thick](1,-1)--(-1,4);\draw[ultra thick](6,1)--(4,6);
\node[below left] at (1,-1) {$x_1$};
\node[below left] at (.5,-.5) {$x_2$};
\node[below left] at (0,0) {$x_3$};
\node[below left] at (-.5,.5) {$x_4$};
\node[below left] at (-1,1) {$x_5$};
\node[above left] at (-1,4) {$x'_1$};
\node[above left] at (-.5,4.5) {$x'_2$};
\node[above left] at (0,5) {$x'_3$};
\node[above left] at (0.5,5.5) {$x'_4$};
\node[above left] at (1,6) {$x'_5$};
\node[above right] at (4,6) {$y'_1$};
\node[above right] at (4.5,5.5) {$y'_2$};
\node[above right] at (5,5) {$y'_3$};
\node[above right] at (5.5,4.5) {$y'_4$};
\node[above right] at (6,4) {$y'_5$};
\node[below right] at (6,1) {$y_1$};
\node[below right] at (5.5,.5) {$y_2$};
\node[below right] at (5,0) {$y_3$};
\node[below right] at (4.5,-.5) {$y_4$};
\node[below right] at (4,-1) {$y_5$};
\end{tikzpicture}}
\right)\star\varphi_8$
\caption{The domain of the isomorphism $\varphi_8$ defining the HNN extension is generated by $x_1,x_2,x_3,x'_1,x'_2,x'_3$, while the codomain is generated by the corresponding ``$y$" elements. The isomorphism is determined by $\varphi_8(x_i)=y_{4-i}$ and $\varphi_8(x'_i)=y'_{4-i}$, for $1\leq i\leq3$, which correspond to relations $v_8x_iv_8^{-1}=y_{4-i}$ and $v_8x'_iv_8^{-1}=y'_{4-i}$ ($1\leq i\leq3$) in $B_{22}(8\times3)$. These relations together with the stable letter $v_8$ must then be added to the RAAG presentation of the indicated graph to yield a presentation for $B_{22}(8\times3)$. The two ``unexpected'' RAAG-type commutation relations $x_1x'_1=x'_1x_1$ and $y_1y'_1=y'_1y_1$ correspond to the two thick edges. The former one, for instance, is forced from the fact that $v_8x_1v_8^{-1}=y_3$ commutes with $v_8x'_1v_8^{-1}=y'_3$.  
In particular, neither the domain nor the domain of $\varphi_8$ are free; they are instead RAAGs associated to the induced subgraphs.}
\label{p8}
\end{figure}

\begin{theorem}\label{resultadoprincipal}
For $p\geq5$ and $q=3$, there is an isomorphism $$B_{3p-2}(p\times 3)\cong\raag(S_{p-3})\star\varphi_p$$ expressing $B_{3p-2}(p\times 3)$ as the {\em HNN} extension of the {\em RAAG} associated to the graph $S_{p-3}$ with respect to the isomorphism $\varphi_p\colon \text{\em RAAG}(X_{p-3})\to\text{\em RAAG}(Y_{p-3})$, where:
\begin{enumerate}
\item $S_{p-3}$ is the ``square-type" bipartite graph with vertices $x_i$, $x'_i$, $y'_i$ and $y_i$ for $1\leq i\leq p-3$, and the following two types of edges:
\begin{itemize}
\item Whenever $i+j>p-3$, there are four edges: one between $x_i$ and $x'_j$, one between $x'_i$ and $y'_j$, one between $y'_i$ and $y_j$, and one between $y_i$ and $x_j$. 
\item Whenever $i+j<p-5$, there are two edges: one between $x_i$ and $x'_j$, and one between $y'_i$ and $y_j$. 
\end{itemize}
\item $\text{\em RAAG}(X_{p-3})$ and $\text{\em RAAG}(Y_{p-3})$ are the full sub\hspace{.18mm}{\em RAAG}\hspace{-.1mm}s of \hspace{.2mm}$\raag(S_{p-3})$ associated to induced subgraphs $X_{p-3}$ and $Y_{p-3}$ of $S_{p-3}$. Here $X_{p-3}$ is induced by the vertices $x_i$ and $x'_i$ with $1\leq i\leq p-5$, while $Y_{p-3}$ is induced by the vertices $y_i$ and $y'_i$ with $1\leq i\leq p-5$.

\item The isomorphism $\varphi_p\colon\raag(X_{p-3})\to\raag(Y_{p-3})$ is determined by $\varphi_p(x_i)=y_{p-i-4}$ and $\varphi_p(x'_i)=y'_{p-i-4}$, for $1\leq i\leq p-5$. 
\end{enumerate}
\end{theorem}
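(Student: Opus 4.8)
The plan is to prove the isomorphism by establishing that the standard presentation of the HNN extension $\raag(S_{p-3})\star\varphi_p$ is Tietze-equivalent to the simple commutator-related presentation of $B_{3p-2}(p\times3)$ provided by Theorem~\ref{resultadocasiprincipal}, whose generators and relators are listed in Section~\ref{structure}. Recall that the standard HNN presentation has as generators the $4(p-3)$ vertices $x_i,x'_i,y'_i,y_i$ of $S_{p-3}$ together with a stable letter $v_p$, and as relators the commutator $[a,b]$ for each edge $ab$ of $S_{p-3}$ together with the conjugation relations $v_px_iv_p^{-1}=y_{p-i-4}$ and $v_px'_iv_p^{-1}=y'_{p-i-4}$ for $1\leq i\leq p-5$. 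I would reduce this presentation, through generator elimination and deletion of redundant relators, to one with $\beta_1$ generators and $\beta_2$ relators and then match it term by term with the presentation in Section~\ref{structure}.

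Before the reduction, I would verify that $\varphi_p$ is a genuine isomorphism $\raag(X_{p-3})\to\raag(Y_{p-3})$, so that the right-hand side is well defined. Since $\varphi_p$ bijects the vertex sets of $X_{p-3}$ and $Y_{p-3}$, it suffices to check that it is a graph isomorphism, i.e.\ that $x_i,x'_j$ are adjacent in $X_{p-3}$ if and only if $y_{p-i-4},y'_{p-j-4}$ are adjacent in $Y_{p-3}$. Writing $a=p-j-4$ and $b=p-i-4$ one has $a+b=2p-i-j-8$, whence $a+b>p-3\iff i+j<p-5$ and $a+b<p-5\iff i+j>p-3$; thus the type-1 and type-2 thresholds of $S_{p-3}$ are interchanged by the substitution and the two adjacency conditions coincide. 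Hence $\varphi_p$ induces a RAAG isomorphism between the two full sub\hspace{.18mm}RAAGs and the HNN extension makes sense.

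For the reduction I would use the $2(p-5)$ conjugation relations to solve $y_{p-i-4}=v_px_iv_p^{-1}$ and $y'_{p-i-4}=v_px'_iv_p^{-1}$, eliminating the generators $y_j,y'_j$ with $1\leq j\leq p-5$ and leaving the $2p-1=\beta_1$ generators $x_i,x'_i$ $(1\leq i\leq p-3)$, $y_{p-4},y_{p-3},y'_{p-4},y'_{p-3}$ and $v_p$. Under this substitution the type-1 commutator $[y_{p-i-4},y'_{p-j-4}]$ attached to a matched $y$-edge becomes the conjugate $v_p[x_i,x'_j]v_p^{-1}$, hence is a consequence of the type-2 relation $[x_i,x'_j]=1$ already carried by $S_{p-3}$ whenever $i+j\leq p-6$; this is exactly the coincidence recorded around Figure~\ref{p8}, where $[x_1,x'_1]$ is forced by the commutativity of $v_px_1v_p^{-1}=y_3$ and $v_px'_1v_p^{-1}=y'_3$, and the symmetric coincidence relates the type-2 $y$-edges to type-1 $x$-edges. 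Deleting one member of each coincident pair removes precisely $(p-6)(p-7)$ relators (which splits as $\tfrac{(p-6)(p-7)}2$ of each symmetric kind), leaving $\beta_2=2p^2-10p+12$ relators among the surviving generators and $v_p$.

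The main obstacle is the final term-by-term matching: one must verify, respecting the exact thresholds $i+j>p-3$ and $i+j<p-5$ that define the two edge-families of $S_{p-3}$, that the surviving substituted commutators coincide with the geometric commutator relators of Section~\ref{structure} under the chosen correspondence of generators, with none left over and none missing. As a global consistency check, the Euler characteristics of the two presentation $2$-complexes differ by exactly $(p-6)(p-7)$, which vanishes at $p=6$ and $p=7$ and thereby accounts for the degenerate behaviour (free conjugated subgroups, no redundant edges) visible in Figures~\ref{p6} and~\ref{p7}; but this count only corroborates the relator bookkeeping and does not replace the relation-by-relation identification, which is where the combinatorics of $S_{p-3}$ must be confronted directly.
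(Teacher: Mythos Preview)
Your verification that $\varphi_p$ is a graph isomorphism $X_{p-3}\to Y_{p-3}$ is correct and is exactly what the paper does at the end of Section~5. Your observation that, after eliminating the $y_j,y'_j$ with $j\leq p-5$, the substituted $[y'_i,y_j]$-relators become conjugates of the ``extra'' type-2 $[x_i,x'_j]$-relators (and vice versa) is also correct; this is precisely the paper's remark that the type-2 edges ``amount to commutativity relations \dots\ which, at any rate, are forced by~(\ref{lasdeconjugacion}).'' Your relator and generator counts are right.

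However, what you yourself flag as ``the main obstacle'' is not an obstacle to be noted and set aside: it \emph{is} the proof. After your elimination, the surviving relators are commutators such as $[v_p x_{p-i-4} v_p^{-1},\,x_j]$, $[x'_i,\,y'_j]$ (with $j\in\{p-4,p-3\}$), etc., sitting among the generators $x_i,x'_i,y_{p-4},y_{p-3},y'_{p-4},y'_{p-3},v_p$. The presentation from Section~\ref{structure} (for $q=3$, Example~\ref{finq3}) has relators like $[A_{1,j},\,u_1^{-1}u_2^{-1}u_1 A_{1,i} v_1]$ and $[u_1 v_1 A_{2,j},\, u_1 A_{2,i} u_1^{-1}]$ among $u_1,u_2,v_1,A_{\ell,i}$. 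You have not proposed any dictionary between these two generating sets, and without one there is nothing to check ``term by term.'' An Euler-characteristic match is necessary but nowhere near sufficient.

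The paper does not attempt your direct match against Section~\ref{structure}. Instead it first pushes the $B_{3p-2}(p\times3)$ side through two further simplifications (the substitution $w:=u^{-1}u_2u$ yielding (\ref{treintaycinco})--(\ref{treintaysiete}), and then the telescoping change of basis of Lemma~\ref{changebasis}) to reach the presentation of Corollary~\ref{abcd}, whose relators are literally $\alpha(i)\,\&\,\beta(j)$, $\beta(i)\,\&\,\gamma(j)$, $\gamma(i)\,\&\,\delta(j)$, $\delta(i)\,\&\,\alpha(j)$ for $i+j>p-3$. Only \emph{after} this preprocessing does the square-type pattern of $S_{p-3}$ become visible. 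The paper then writes down the auxiliary group $H_p$ (your HNN group \emph{before} the redundant type-2 edges are added) and exhibits explicit mutually inverse homomorphisms $\theta,\Theta$ between $H_p$ and $B_{3p-2}(p\times3)$ (Theorem~\ref{theta}, Lemma~\ref{auxi}); the formulas for $\Theta$ in (\ref{45})--(\ref{51}) constitute exactly the missing dictionary your plan needs. If you want to salvage your approach, you should target Corollary~\ref{abcd} rather than Section~\ref{structure}, and you will still need to supply the assignment $x_i\mapsto\alpha(i)$, $x'_i\mapsto\beta(i)$, $y'_i\mapsto\gamma(i)$, $y_i\mapsto\delta(i)$, $v_p\mapsto v$ (or its equivalent) and verify it respects the relators; that verification is the content of Lemma~\ref{auxi} and Theorem~\ref{theta}.
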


A classifying space for an HNN extension can be constructed as a mapping torus (\cite[Example 1B.13]{MR1867354}), so:
\begin{corollary}\label{mappingtorus}
Consider the inclusions $\raag(X_{p-3})\stackrel{\hspace{1mm}\iota_X}\longhookrightarrow\raag(S_{p-3}) \stackrel{\iota_Y}\longhookleftarrow \raag(Y_{p-3})$. For $p\geq6$, $\UC(3p-2, p\times3)$ is homotopy equivalent to the mapping-cylinder realization (say, using Salvetti complexes) of the graph of groups 
$$\xymatrix{
\raag(X_{p-3}) \hspace{8mm} \ar@/_1.5pc/[r]_{\iota_Y\circ \varphi_p} \ar@/^1.5pc/[r]^{\iota_X}  & \hspace{8.3mm}\raag(S_{p-3}).
}$$
\end{corollary}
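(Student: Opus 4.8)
The plan is to deduce the statement by combining the algebraic HNN description of Theorem~\ref{resultadoprincipal} with the asphericity supplied by Theorem~\ref{resultadocasiprincipal}, and then to invoke uniqueness of classifying spaces up to homotopy. First I would record that, for $p\geq6$ and $q=3$, Theorem~\ref{resultadocasiprincipal} guarantees that $\UC(3p-2,p\times3)$ is aspherical, hence a model for $K(G,1)$ with $G:=B_{3p-2}(p\times3)$. By Theorem~\ref{resultadoprincipal}, $G$ is the HNN extension $\raag(S_{p-3})\star\varphi_p$, whose underlying graph of groups has the single vertex group $\raag(S_{p-3})$, the single edge group $\raag(X_{p-3})$, and the two edge homomorphisms $\iota_X$ and $\iota_Y\circ\varphi_p$ displayed in the statement.

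Next I would build the candidate space $M$ as the topological realization of this graph of groups, following \cite[Example 1B.13]{MR1867354}. Concretely, take the Salvetti complexes $\sal(S_{p-3})$ and $\sal(X_{p-3})$ as aspherical models for the vertex and edge groups, together with the two cellular maps $\sal(X_{p-3})\to\sal(S_{p-3})$ realizing $\iota_X$ and $\iota_Y\circ\varphi_p$ (the latter factoring through $\sal(Y_{p-3})$ via the Salvetti-complex incarnation of the isomorphism $\varphi_p$). Then $M$ is obtained by gluing the two ends of the cylinder $\sal(X_{p-3})\times[0,1]$ onto $\sal(S_{p-3})$ along these two maps, and a van Kampen computation (equivalently, the graph-of-groups machinery) identifies $\pi_1(M)$ with the prescribed HNN extension, so $\pi_1(M)\cong G$.

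The step requiring care, and the only genuine obstacle, is the asphericity of $M$, which reduces to checking that the two edge maps are $\pi_1$-injective. Here I would use that $\iota_X$ is the inclusion of the full sub-RAAG on the induced subgraph $X_{p-3}$, hence injective, and that $\iota_Y\circ\varphi_p$ is injective as the composite of the isomorphism $\varphi_p$ with the inclusion $\iota_Y$ of the full sub-RAAG on $Y_{p-3}$; injectivity of inclusions of full sub-RAAGs is the standard fact underlying Theorem~\ref{resultadoprincipal}. Since all three groups $\raag(S_{p-3})$, $\raag(X_{p-3})$ and $\raag(Y_{p-3})$ already admit aspherical classifying spaces, the construction in \cite[Example 1B.13]{MR1867354} then yields an aspherical $M$ (its universal cover assembles as a tree of universal covers of the Salvetti pieces). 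I expect this injectivity--asphericity bookkeeping, rather than any new idea, to be all that is needed, the substantive content having been settled in Theorems~\ref{resultadocasiprincipal} and~\ref{resultadoprincipal}.

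Finally, with both $\UC(3p-2,p\times3)$ and $M$ exhibited as $K(G,1)$ spaces, uniqueness of classifying spaces up to homotopy equivalence delivers the asserted equivalence. I would close by remarking that the hypothesis $p\geq6$ is what keeps the edge group $\raag(X_{p-3})$ nontrivial and the cylinder construction genuinely informative: for $p=5$ one has $X_2=\varnothing$, so the HNN extension degenerates to a free product with $\mathbb{Z}$ and the attached cylinder collapses to a wedged circle.
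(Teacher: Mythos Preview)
Your proposal is correct and follows essentially the same route as the paper: the paper offers no formal proof beyond the single sentence ``A classifying space for an HNN extension can be constructed as a mapping torus (\cite[Example 1B.13]{MR1867354}), so:'' immediately preceding the corollary, and your argument is precisely the standard unpacking of that citation combined with Theorem~\ref{resultadoprincipal}. The injectivity of the edge maps that you flag as the ``only genuine obstacle'' is supplied in the paper by Lemma~\ref{EHNN}, and the asphericity of $\UC(3p-2,p\times3)$ is recorded in Proposition~\ref{asphericalspaces}(3) (and restated in Theorem~\ref{resultadocasiprincipal}), so every ingredient you invoke is already in place.
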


Although geometrically appealing, the 3-dimensional mapping torus in Corollary \ref{mappingtorus} is not efficient homotopicaly speaking, as $\hdim(\UC_{3p-2}(p\times3))\leq2$ (Theorem \ref{resultadocasiprincipal}).

\section{Discrete configuration space}
For a CW complex $X$ with cells $e$, the discrete configuration space $\DF(X,n)$ of $n$ non-colliding labelled cells of $X$ is the subcomplex of $X^n$ consisting of the cells $$(e_1,\ldots,e_n) :=e_1 \times\cdots\times e_n$$ satisfying $\overline{e_i} \cap \overline{e_j}=\varnothing$ for $i\neq j$. The corresponding unordered discrete configuration space $\UDF(X,n)$ is the quotient complex of $\DF(X,n)$ by the cellular free action of the symmetric group $\Sigma_n$ on $n$ letters that permutes cell coordinates. The cell of $\UDF(X,n)$ given by the $\Sigma_n$-orbit of $(e_1,\ldots, e_n)$ is denoted by $\{e_1,\ldots, e_n\}$. For $X=\Gamma$ a graph, this construction was first studied in Abrams' Ph.D. thesis (\cite{MR2701024}).

\begin{theorem}[{\cite{MR2701024}}]\label{Eilenbergmaclane}
For a graph $\Gamma$, $\DF(\Gamma,n)$ and $\UDF(\Gamma,n)$ are aspherical (only the former space might fail to be path-connected, but even so all of its components are aspherical).
\end{theorem}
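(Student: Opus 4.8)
The plan is to realize $\DF(\Gamma,n)$ as a \emph{non-positively curved} (NPC) cube complex and then invoke the standard CAT(0) machinery. First I would record the cubical structure that is already implicit in the definition of the discrete configuration space: a cell $\{e_1,\ldots,e_n\}$ (with each $e_i$ a vertex or an open edge of $\Gamma$ and pairwise-disjoint closures) is a product of copies of $[0,1]$, one for each edge-coordinate, and of points, one for each vertex-coordinate. Thus such a cell is a cube of dimension equal to the number of edge-coordinates, and its faces are obtained by collapsing an edge-coordinate $e$ to one of the two endpoints of $\overline{e}$. The key observation is that the disjointness-of-closures condition is preserved under passing to faces, so these cubes assemble into a genuine cube complex structure on $\DF(\Gamma,n)$.

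The heart of the argument, and the step I expect to be the main obstacle, is the verification of Gromov's link condition: the link of every vertex must be a \emph{flag} simplicial complex. A vertex of $\DF(\Gamma,n)$ is an ordered tuple $(v_1,\ldots,v_n)$ of distinct vertices of $\Gamma$, and a direction in its link is a choice, at one of the coordinates $i$, of a (half-)edge $e$ of $\Gamma$ with $v_i\in\overline{e}$ and $\overline{e}$ disjoint from all the other $\overline{v_j}$. A set of such directions spans a cube precisely when the chosen edges occupy distinct coordinates and have pairwise-disjoint closures. Since the only obstruction to forming a higher cube is \emph{pairwise} incompatibility of closures, any collection of directions that is pairwise compatible is automatically globally compatible; this is exactly the flag condition. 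Hence each link is flag, so by Gromov's theorem the cube complex is locally CAT(0), and by the Cartan--Hadamard theorem the universal cover of each connected component is globally CAT(0), therefore contractible. Consequently every component of $\DF(\Gamma,n)$ is aspherical.

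Finally I would transfer asphericity to the unordered space. The symmetric group $\Sigma_n$ acts on $\DF(\Gamma,n)$ by permuting coordinates; this action is free (the coordinates are pairwise distinct) and cubical, hence by cubical isometries. Therefore $\DF(\Gamma,n)\to\UDF(\Gamma,n)$ is a covering projection, the quotient $\UDF(\Gamma,n)$ inherits the NPC cube complex structure, and its universal cover coincides with that of a component of $\DF(\Gamma,n)$, so it is again contractible; thus $\UDF(\Gamma,n)$ is aspherical as well. For the parenthetical connectivity remark, I would note that $\DF(\Gamma,n)$ can indeed be disconnected---for instance, for $\Gamma$ a (subdivided) cycle, labelled particles cannot exchange their cyclic order, so the ordered space splits into components indexed by cyclic orderings---while the corresponding unordered space remains connected; in all cases the asphericity conclusion applies componentwise.
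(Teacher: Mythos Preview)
The paper does not prove this theorem; it merely records it with a citation to Abrams' thesis \cite{MR2701024}. Your proposal---realizing $\DF(\Gamma,n)$ as a cube complex, verifying Gromov's link condition to conclude it is locally CAT(0), and then descending to $\UDF(\Gamma,n)$ via the free cubical $\Sigma_n$-action---is exactly Abrams' original argument, and your sketch of the flag verification (pairwise disjointness of closures is the only obstruction, hence pairwise compatibility implies global compatibility) is correct.
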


Let $\Gamma_{p,q}$ be the 1-skeleton of the cube complex $\puz_{p,q}$ obtained by restricting to $[1,p]\times[1,q]$ the standard 2-dimensional cube-complex structure on the plane. See Figure \ref{maxtree}.

\begin{figure}
\centering
\begin{tikzpicture}[x=.8cm,y=.8cm]
\draw (0,0)--(5,0);
\node at (0,0) {$\bullet$};\node at (1,0) {$\bullet$};
\node at (2,0) {$\bullet$};\node at (3,0) {$\bullet$};
\node at (4,0) {$\bullet$};\node at (5,0) {$\bullet$};
\draw (0,1)--(5,1);
\node at (0,1) {$\bullet$};\node at (1,1) {$\bullet$};
\node at (2,1) {$\bullet$};\node at (3,1) {$\bullet$};
\node at (4,1) {$\bullet$};\node at (5,1) {$\bullet$};
\draw (0,2)--(5,2);
\node at (0,2) {$\bullet$};\node at (1,2) {$\bullet$};
\node at (2,2) {$\bullet$};\node at (3,2) {$\bullet$};
\node at (4,2) {$\bullet$};\node at (5,2) {$\bullet$};
\draw (0,3)--(5,3);
\node at (0,3) {$\bullet$};\node at (1,3) {$\bullet$};
\node at (2,3) {$\bullet$};\node at (3,3) {$\bullet$};
\node at (4,3) {$\bullet$};\node at (5,3) {$\bullet$};
\draw (0,0)--(0,3);\draw (1,0)--(1,3);\draw (2,0)--(2,3);
\draw (3,0)--(3,3);\draw (4,0)--(4,3);\draw (5,0)--(5,3);
\node at (0,-.5) {\footnotesize$1$};
\node at (1,-.5) {\footnotesize$2$};
\node at (2,-.5) {\footnotesize$3$};
\node at (3,-.5) {\footnotesize$4$};
\node at (4,-.5) {\footnotesize$5$};
\node at (5,-.5) {\footnotesize$6$};
\node at (-.5,0) {\footnotesize$1$};
\node at (-.5,1) {\footnotesize$2$};
\node at (-.5,2) {\footnotesize$3$};
\node at (-.5,3) {\footnotesize$4$};

\end{tikzpicture}
\caption{The graph $\Gamma_{6,4}$. Fill in the 15 loops with solid squares to get the complex $\puz_{6,4}$.% (left) and its maximal tree (right). In this setting $\varepsilon_i^j$ stands for the (dotted) vertical edge outside the maximal tree whose lower vertex has label $i$. Here $1\leq i\leq p(q-1)$ with $i$ not divisible by $p$. We refer to such an edge as a \emph{deleted} one.
}
\label{maxtree}
\end{figure}
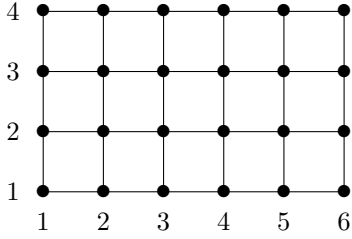

\begin{proposition}[{\cite{MR4640135}}]\label{UXmodel}
$\DF(\puz_{p,q},n)$ sits inside $\C(n,p\times q)$ as a strong $\Sigma_n$-equivariant deformation retract. In particular $\UDF(\puz_{p,q},n)\cong\UC(n,p\times q)$.
\end{proposition}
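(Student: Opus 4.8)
The plan is to realize $\C(n,p\times q)$ and $\DF(\puz_{p,q},n)$ as subspaces of the product cube complex $(\puz_{p,q})^{n}$, whose underlying space is $([1,p]\times[1,q])^{n}$, and then to build an equivariant strong deformation retraction from the former onto the latter. First I would introduce the \emph{center map}: a unit square $[x,x+1]\times[y,y+1]\subseteq R_{p,q}$ is recorded by the point $c=(x+1,y+1)\in[1,p]\times[1,q]$, the underlying space of $\puz_{p,q}$, and two such squares have disjoint interiors exactly when their points satisfy $\|c-c'\|_{\infty}\ge 1$. This identifies $\C(n,p\times q)$, $\Sigma_{n}$-equivariantly (with $\Sigma_{n}$ permuting coordinates), with the subspace $\mathcal{C}$ of $([1,p]\times[1,q])^{n}$ consisting of tuples $(c_{1},\dots,c_{n})$ with $\|c_{i}-c_{j}\|_{\infty}\ge 1$ for all $i\ne j$. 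I would then establish the containment $\DF(\puz_{p,q},n)\subseteq\mathcal{C}$ by the elementary metric lemma that disjoint closed cells $\overline{e},\overline{f}$ of the unit grid complex $\puz_{p,q}$ satisfy $d_{\infty}(\overline{e},\overline{f})\ge 1$: disjointness forces the defining integer-endpoint intervals to be disjoint in some coordinate, and integrality produces a gap of at least $1$ there, so $\|a-b\|_{\infty}\ge 1$ for all $a\in\overline{e}$ and $b\in\overline{f}$.

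Next I would translate the subcomplex $\DF(\puz_{p,q},n)$ into a pointwise condition directly comparable with $\mathcal{C}$. Writing $\sigma(c)$ for the carrier of $c$ (the unique open cell containing $c$ in its relative interior), a product cell $\overline{e_{1}}\times\cdots\times\overline{e_{n}}$ contains $(c_{1},\dots,c_{n})$ if and only if $\overline{\sigma(c_{i})}\subseteq\overline{e_{i}}$ for every $i$; hence a point lies in $\DF(\puz_{p,q},n)$ if and only if the carriers $\sigma(c_{1}),\dots,\sigma(c_{n})$ have pairwise disjoint closures. Together with the lemma, this presents $\DF(\puz_{p,q},n)$ as the locus inside $\mathcal{C}$ where the exclusion already holds combinatorially, and pinpoints the points to be removed by the retraction: those where some pair of carriers touch while the centers nonetheless stay $L^{\infty}$-distance $\ge 1$ apart.

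The heart of the argument is the construction of a $\Sigma_{n}$-equivariant strong deformation retraction of $\mathcal{C}$ onto $\DF(\puz_{p,q},n)$, which I would carry out cube by cube. Intersecting $\mathcal{C}$ with a closed cube $\overline{\tau}=\overline{e_{1}}\times\cdots\times\overline{e_{n}}$ of $(\puz_{p,q})^{n}$, the lemma shows that only the \emph{touching} pairs, those with $\overline{e_{i}}\cap\overline{e_{j}}\ne\varnothing$, constrain $\overline{\tau}$, so that $\overline{\tau}\cap\mathcal{C}$ is $\overline{\tau}$ with the open regions $\{\|c_{i}-c_{j}\|_{\infty}<1\}$ of the touching pairs deleted. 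Each such corner-cut region is to be deformation retracted onto $\overline{\tau}\cap\DF(\puz_{p,q},n)$, the union of those faces of $\overline{\tau}$ all of whose factors have pairwise disjoint closures, leaving the faces already in $\DF(\puz_{p,q},n)$ fixed; a model case (a triangle retracting onto two of its legs) already exhibits the local retraction. These local retractions are then to be assembled over the skeleta of $(\puz_{p,q})^{n}$ in increasing dimension. This is morally the squared analogue of the discretization underlying Theorem \ref{Eilenbergmaclane}.

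The main obstacle is this assembly, where three requirements must hold simultaneously: the homotopy must never violate an exclusion constraint, which is automatic for non-touching pairs by the lemma but must be arranged for touching pairs; the retractions on distinct cubes must agree along shared faces so as to glue into a globally continuous homotopy; and every choice must be made by relabel-invariant formulas so that the homotopy commutes with the $\Sigma_{n}$-action. I expect the gluing to be the delicate point, and would enforce it by routing each point's motion through its minimal carrier cube and inducting on dimension (a collapsing argument on the cells of $\mathcal{C}\setminus\DF(\puz_{p,q},n)$), so that compatibility along faces is built in. Finally, since $\Sigma_{n}$ acts freely on $\C(n,p\times q)$ and on $\DF(\puz_{p,q},n)$ and the retraction is equivariant, it descends through the quotient covering maps to a strong deformation retraction of $\UC(n,p\times q)$ onto $\UDF(\puz_{p,q},n)$, yielding the homotopy equivalence $\UDF(\puz_{p,q},n)\cong\UC(n,p\times q)$.
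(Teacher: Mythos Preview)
The paper does not supply its own proof of this proposition; it is quoted with a citation to \cite{MR4640135} and used as a black box. There is therefore nothing in the present paper to compare your proposal against.

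That said, your outline is a sound reconstruction of the discretization argument and is essentially the approach taken in the cited source. The center-map identification of $\C(n,p\times q)$ with $\mathcal{C}\subset([1,p]\times[1,q])^n$, the $L^\infty$-gap lemma for disjoint closed grid cells, and the carrier characterization of $\DF(\puz_{p,q},n)$ are all correct and set up the problem cleanly. Your diagnosis that the assembly of the cube-by-cube retractions is the crux is accurate, and the mechanism you propose---induct over the skeleta of $(\puz_{p,q})^n$, routing each point through its minimal carrier cube so that restrictions to faces are already determined by earlier stages---is exactly what makes the local pieces cohere while keeping the homotopy inside $\mathcal{C}$. Equivariance then follows from the relabel-symmetry of every formula, and the descent to unordered quotients is routine.
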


Observe that $\UDF(\Gamma_{p,q},n)$ is a subcomplex of $\UDF(\puz_{p,q},n)$, and they share a common 1-skeleton. Indeed, any cell $\{e_1,\ldots,e_n\}$ of $\UXnpq$ outside $\UDF(\Gamma_{p,q},n)$ has dimension at least 2, as one of the cell ingredients $e_i$ would have to be an actual square of $\puz_{p,q}$. So, just as $\UDF(\Gamma_{p,q},n)$, $\UXnpq$ is path-connected. In fact 
\begin{equation}\label{soniguales}
\UDF(\Gamma_{p,q},n)=\UXnpq, \text{ which is at most 2-dimensional if $n\geq pq-2$,}
\end{equation}
for then there is no room for a square-type cell ingredient $e_i$, nor for three or more edge-type cell ingredients $e_i$. In particular, Theorem~\ref{Eilenbergmaclane}, Proposition~\ref{UXmodel} and (\ref{soniguales}) give the asphericity of $\UC(n,p\times q)$ noted in Proposition~\ref{asphericalspaces} when $n\geq pq-2$. Likewise, (\ref{soniguales}) and Proposition~\ref{UXmodel} yields the estimate
\begin{equation}\label{alomasdos}
\hdim(\UC(pq-2,p\times q))\leq2
\end{equation}
relevant for Theorem \ref{resultadocasiprincipal} (see also Lemma~\ref{conteocritico} below). The sharpness of this estimate for $\max(p,q)>\min(p,q)\geq3$ will follow once we argue the homological assertions in Theorem~\ref{resultadocasiprincipal}.

\smallskip
The discrete Morse theoretic method in \cite[Theorem 2.5]{MR2171804} was applied in \cite{AGK} to the cubical homotopy model $\UDF(\puz_{p,q},n)$ of $\UC(n,p\times q)$ in order to produce a raw presentation of the fundamental group $B_n(p\times q)$. In the case of interest for us, namely $n=pq-2$ so (\ref{soniguales}) holds, this is done in terms of the gradient field on $\UDF(\Gamma_{p,q},pq-2)$ constructed by Farley-Sabalka with respect to a certain maximal tree $T_{p,q}$ for $\Gamma_{p,q}$. Details are spelled out in \cite[Subsection 2.2]{AGK}, from which Lemma \ref{conteocritico} below follows by direct counting. In addition, the presentation for $B_{pq-2}(p\times q)$ summarized in Proposition \ref{generators} below is easily extracted from the discussion in \cite[Section 4]{AGK} using the notation in \cite[Example 4.5]{AGK}.

\begin{lemma}\label{conteocritico}
For $p\geq q\geq3$, Farley-Sabalka's discrete gradient field on the homotopy model $\UDF(\Gamma_{p,q},pq-2)$ of $\UC(pq-2,p\times q)$ has $c_i$ critical cells of dimension $i$, where $$c_0=1, \quad c_1=3(p-1)(q-1)-2, \quad c_2=\binom{(p-1)(q-1)}2-(p-2)(q-2)$$ and $c_i=0$ for $i\neq0,1,2$.
\end{lemma}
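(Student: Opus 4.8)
The plan is to read the critical cells directly off the Farley--Sabalka gradient field recalled in \cite[Subsection 2.2]{AGK}, using its combinatorial characterization: a cell of $\UDF(\Gamma_{p,q},pq-2)$ is critical precisely when each of its vertex-ingredients is \emph{blocked} (the neighbor toward the root of $T_{p,q}$ in the Farley--Sabalka order is occupied) and each of its edge-ingredients is \emph{disrespectful} (either a deleted edge, or a tree edge whose root-side endpoint carries a smaller vacant slot). The first step is pure bookkeeping on occupied vertices. A critical $i$-cell has $i$ edge-ingredients and $pq-2-i$ vertex-ingredients, hence covers $2i+(pq-2-i)=pq-2+i$ of the $pq$ vertices of $\Gamma_{p,q}$, leaving exactly $2-i$ vacancies. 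Thus $i\geq 3$ is impossible, which recovers the dimension bound of \eqref{soniguales} and gives $c_i=0$ for $i\neq 0,1,2$; moreover a critical $2$-cell is \emph{fully packed} (no vacancy), a critical $1$-cell has a single vacancy, and a critical $0$-cell has two.

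For $c_0$ I would invoke the standard Farley--Sabalka fact that the field carries a unique critical $0$-cell: the blocking condition forces the occupied vertices to form an order-ideal of the vertex ordering, and the only ideal of size $pq-2$ is the set of the $pq-2$ smallest vertices, so $c_0=1$. For $c_2$ the key observation is that a packed configuration has no vacant vertex, so no tree edge can be disrespectful (the smaller-slot condition is vacuously violated); hence both edge-ingredients of a critical $2$-cell must be deleted edges. Conversely, any two deleted edges with disjoint closures produce a packed configuration in which every particle is blocked and both edges are disrespectful, i.e.\ a genuine critical $2$-cell. Therefore $c_2$ equals the number of unordered pairs of deleted edges with disjoint closures. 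Since $b_1(\Gamma_{p,q})=(p-1)(q-1)$, there are exactly $(p-1)(q-1)$ deleted edges (one per unit square), so $c_2=\binom{(p-1)(q-1)}{2}$ minus the number of pairs of deleted edges sharing an endpoint; a short check against the explicit description of $T_{p,q}$ and its deleted edges in \cite[Subsection 2.2]{AGK} identifies these offending pairs (morally, one incidence per interior grid vertex) as numbering $(p-2)(q-2)$, yielding the stated formula.

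The count $c_1$ is the most delicate. A critical $1$-cell consists of one disrespectful edge, a single vacancy, and $pq-3$ blocked particles, so I would organize the enumeration by the type and location of the edge---a deleted edge (automatically disrespectful) or a disrespectful tree edge---and, for each, by the admissible positions of the lone vacancy that keep every remaining particle blocked without rendering the edge respectful. Summing these contributions over the geometry of $T_{p,q}$ produces the leading term $3(p-1)(q-1)$, while the additive $-2$ correction absorbs the degeneracies occurring at the corner squares and near the root of the tree.

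The main obstacle is precisely this last bookkeeping for $c_1$: tracking how a vacancy adjacent to the chosen edge interacts with the blocked-and-disrespectful conditions along the boundary of the grid and at the root of $T_{p,q}$, where the generic local pattern breaks and supplies the constant correction. By contrast, $c_0$ is forced and $c_2$ reduces to the clean pairing argument above, so the whole statement follows once the explicit tree $T_{p,q}$ of \cite[Subsection 2.2]{AGK} is fixed and these boundary cases are enumerated.
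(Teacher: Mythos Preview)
Your approach is exactly the one the paper indicates: the paper does not prove the lemma in-text but simply says it ``follows by direct counting'' from the Farley--Sabalka gradient field described in \cite[Subsection~2.2]{AGK}, and your sketch is an attempt to carry out that counting. The dimension bound, the argument for $c_0=1$, and the structural observation that a fully packed critical $2$-cell must use two deleted edges are all correct and match what the direct counting would show.

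That said, two of your steps are not yet proofs, and both depend essentially on the \emph{specific} tree $T_{p,q}$ chosen in \cite{AGK}. First, the claim that the number of pairs of deleted edges sharing an endpoint is exactly $(p-2)(q-2)$ is not determined by the abstract fact that there are $(p-1)(q-1)$ deleted edges; it depends on how those edges sit in $\Gamma_{p,q}$, which in turn depends on $T_{p,q}$. Your heuristic ``one incidence per interior grid vertex'' is suggestive but does not substitute for checking against the actual tree (for some natural choices of spanning tree the count comes out to $(p-1)(q-2)$, not $(p-2)(q-2)$). Second, your treatment of $c_1$ is, as you acknowledge, only an outline: you describe the case split but do not perform the sum, and the assertion that the boundary/root degeneracies contribute precisely $-2$ is unsupported. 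To complete the argument you must import the explicit description of $T_{p,q}$, its root, and its deleted edges from \cite[Subsection~2.2]{AGK} and then carry out both enumerations in full; the paper is content to leave this to the reader with that reference in hand.
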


The harmless assumption $p\geq q$ in Lemma \ref{conteocritico}, immaterial for the given conclusion, is relevant for the choosing of the maximal tree $T_{p,q}$ and, thus, for the actual gradient field on $\UDF(\Gamma_{p,q},pq-2)$, as well as for the resulting presentation of $B_{pq-2}(p\times q)$ described next.

\begin{proposition}\label{generators}
For $p\geq q\geq3$, the group $B_{pq-2}(p\times q)$ is presented with generators $a_{\ell,i}$, $b_{\ell,i}$ and $c_{\ell,i}$, for $1\leq \ell\leq q-1$ and $1\leq i\leq p-1$, and relators
\begin{align}
a_{1,1} \text{ and } c_{q-1,1}.& \label{rel2} \\
b_{\ell,i} a_{\ell,j} b_{\ell,i}^{-1} c_{\ell,j}^{-1},& 
\text{ \ for\hspace{.3mm} } 1\leq \ell\leq q-1, \ 1\leq i<j\leq p-1. \label{rel3} \\
c_{\ell,i} a_{\ell+1,j} b_{\ell,i}^{-1} b_{\ell+1,j}^{-1},& \text{ \ for\hspace{.3mm} } 1\leq \ell\leq q-2, \ 1\leq i\leq p-1, \ 1\leq j\leq p-i. \label{rel4} \\
[c_{\ell,i}\cc a_{\ell+1,j}],& \text{ \ for\hspace{.3mm} } 1\leq \ell\leq q-2, \  i,j\in\{3,4,\ldots,p-1\}, \ p+2\leq i+j. \nonumber \\
[c_{\ell,i}\cc a_{\lambda,j}],& \text{ \ for\hspace{.3mm} } 2\leq \ell+1<\lambda\leq q-1, \ i,j\in\{1,2,\ldots,p-1\}. \nonumber
\end{align}
\end{proposition}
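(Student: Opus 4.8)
The plan is to read the presentation off discrete Morse theory applied to the graph model $\UDF(\Gamma_{p,q},pq-2)$. By (\ref{soniguales}) and Proposition~\ref{UXmodel} this complex is homeomorphic to $\UC(pq-2,p\times q)$, is at most $2$-dimensional, and by Theorem~\ref{Eilenbergmaclane} it is aspherical, so a presentation of $\pi_1$ determines the full homotopy type. The crucial input is the presentation method of \cite[Theorem 2.5]{MR2171804}: given a discrete gradient field with a single critical $0$-cell, the group $\pi_1$ is generated by the critical $1$-cells, subject to one defining relation per critical $2$-cell, the relator being the word in the critical $1$-cells obtained by pushing the attaching cycle of that $2$-cell down the gradient flow. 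Thus the proof reduces to (i) enumerating and naming the critical $1$-cells, and (ii) computing the flow-rewritten boundary word of each critical $2$-cell.

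First I would fix the Farley--Sabalka gradient field on $\UDF(\Gamma_{p,q},pq-2)$ relative to the maximal tree $T_{p,q}$ of \cite[Subsection 2.2]{AGK}, and recall the combinatorial description of its critical cells via blocked vertices and disrespectful edges. By Lemma~\ref{conteocritico} there are exactly $c_1=3(p-1)(q-1)-2$ critical $1$-cells, and I would organize them into three families according to the type of disrespectful edge carried by the configuration, each family indexed by a ``row'' parameter $\ell\in\{1,\dots,q-1\}$ recording height in the $q$-direction and a ``column'' parameter $i\in\{1,\dots,p-1\}$ recording position in the $p$-direction; these I name $a_{\ell,i}$, $b_{\ell,i}$, $c_{\ell,i}$. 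The formal count $3(p-1)(q-1)$ overshoots $c_1$ by $2$ because the two corner positions $a_{1,1}$ and $c_{q-1,1}$ correspond to non-critical (collapsible) cells; carrying them as formal generators and imposing the relators (\ref{rel2}) keeps the indexing uniform without changing the group.

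Next I would enumerate the $c_2$ critical $2$-cells and read off each relator by tracing its boundary under the flow. Sorting the critical $2$-cells by combinatorial type should yield the four displayed families: a configuration whose two disrespectful edges lie in a common row $\ell$ produces the conjugation relator (\ref{rel3}) $b_{\ell,i}a_{\ell,j}b_{\ell,i}^{-1}c_{\ell,j}^{-1}$; one straddling adjacent rows $\ell,\ell+1$ produces either (\ref{rel4}) or, in the complementary index range, the commutator $[c_{\ell,i},a_{\ell+1,j}]$; and one whose active rows $\ell$ and $\lambda$ are separated by at least one empty row produces the long-range commutator $[c_{\ell,i},a_{\lambda,j}]$, reflecting the geometric independence of squares too far apart to interact. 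The arithmetic side conditions ($i<j$, then $j\leq p-i$ versus $p+2\leq i+j$, and $\lambda>\ell+1$) are exactly the inequalities that dictate which gradient paths are forced, and hence which of the four relator shapes the boundary word collapses to.

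The hard part will be step (ii): faithfully tracking the gradient flow, since a single boundary edge of a critical $2$-cell may travel through a long chain of elementary collapses before landing on critical $1$-cells, and one must check that the resulting words simplify to precisely the stated relators rather than to longer conjugates. This bookkeeping has, however, already been carried out in \cite[Section 4]{AGK}, so the present statement is obtained by specializing that analysis to $n=pq-2$ in the notation of \cite[Example 4.5]{AGK}; the remaining task is the routine matching of the four relator families, together with (\ref{rel2}), against the critical-cell counts of Lemma~\ref{conteocritico}, whose totals $c_1$ and $c_2$ serve as an internal consistency check that no generator or relation has been omitted.
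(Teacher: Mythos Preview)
Your proposal is correct and matches the paper's own justification: the paper does not give an independent proof of this proposition but states (just before it) that the presentation ``is easily extracted from the discussion in \cite[Section 4]{AGK} using the notation in \cite[Example 4.5]{AGK}.'' Your sketch simply unpacks what that extraction entails---Farley--Sabalka's discrete gradient field on $\UDF(\Gamma_{p,q},pq-2)$, the enumeration of critical $1$- and $2$-cells, and the flow-rewritten boundary words---and correctly defers the detailed bookkeeping to the same source.
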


\section{Simple commutator-related structure}\label{structure}
Assuming $p\geq q\geq3$, in this section we describe a process of Tietze transformations that leads to a reduction/simplification of generators/relations in the presentation of $B_{pq-2}(p\times q)$ in Proposition~\ref{generators}. Key portions of this process have been suggested by computer exploration.

\smallskip
Use the relators in (\ref{rel4}) with $i=j=1$ and the relators in (\ref{rel3}) with $i=1$ to eliminate, respectively, $c_{\ell,1}$ and $c_{\ell,j}$ for $2\leq j$ via the substitutions
\begin{align}
c_{\ell,1}=b_{\ell+1,1}b_{\ell,1}a_{\ell+1,1}^{-1},& \text{ \ for } 1\leq\ell\leq q-2 \text{ \ (because of (\ref{rel2}), $c_{q-1,1}$ is negligible),} \nonumber \\
c_{\ell,j}=b_{\ell,1}a_{\ell,j}b_{\ell,1}^{-1},& \text{ \ for } 1\leq\ell\leq q-1 \text{ \ and \ } 2\leq j\leq p-1. \label{rep5}
\end{align}
For instance, after applying the substitutions in (\ref{rep5}), relators in (\ref{rel3}) take the form
$$\mbox{
$b_{\ell,i}a_{\ell,j}b_{\ell,i}^{-1}b_{\ell,1}a_{\ell,j}^{-1}b_{\ell,1}^{-1}$ or, after a cyclic shift, $a_{\ell,j}b_{\ell,i}^{-1}b_{\ell,1}a_{\ell,j}^{-1}b_{\ell,1}^{-1}b_{\ell,i}$,
}$$
which can be written down as the commutator $[a_{\ell,j}\cc b_{\ell,i}^{-1} b_{\ell,1} ]$. Note that the latter form of the relator can equivalently be written down as $[a_{\ell,j}\cc b_{\ell,1}^{-1} b_{\ell,i}]$. This type of manipulations will be used without further notice in what follows. We get a presentation of $B_{pq-2}(p\times q)$ with generators $a_{\ell,i}$ and $b_{\ell,i}$ with $1\leq\ell\leq q-1$ and $1\leq i\leq p-1$, and relators
\begin{align}
a_{1,1},& \label{a11} \\
[a_{\ell,j}\cc b_{\ell,1}^{-1} b_{\ell,i}],& 
\text{ \ for\hspace{.3mm} } 1\leq \ell\leq q-1, \ 2\leq i<j\leq p-1, \label{nada} \\
b_{\ell,1} a_{\ell+1,1}^{-1} a_{\ell+1,j} b_{\ell,1}^{-1} b_{\ell+1,j}^{-1} b_{\ell+1,1},& \text{ \ for\hspace{.3mm} } 1\leq \ell\leq q-2, \ 2\leq j\leq p-1, \label{relocho} \\
b_{\ell,1} a_{\ell,i} b_{\ell,1}^{-1} a_{\ell+1,j} b_{\ell,i}^{-1} b_{\ell+1,j}^{-1},& \text{ \ for\hspace{.3mm} } 1\leq \ell\leq q-2, \ 2\leq i\leq p-1, \ 1\leq j\leq p-i, \label{relnueve} \\
[b_{\ell,1} a_{\ell,i} b_{\ell,1}^{-1}\cc a_{\ell+1,j}],& \text{ \ for\hspace{.3mm} } 1\leq \ell\leq q-2, \  i,j\in\{3,4,\ldots,p-1\}, \ p+2\leq i+j, \label{reldiez} \\
[b_{\ell+1,1} b_{\ell,1} a_{\ell+1,1}^{-1}\cc a_{\lambda,j}],& \text{ \ for\hspace{.3mm} } 2\leq \ell+1<\lambda\leq q-1, \ 1\leq j\leq p-1, \label{relonce} \\
[b_{\ell,1} a_{\ell,i} b_{\ell,1}^{-1} \cc a_{\lambda,j}],& \text{ \ for\hspace{.3mm} } 2\leq \ell+1<\lambda\leq q-1, \ 2\leq i\leq p-1, \ 1\leq j\leq p-1. \label{reldoce}
\end{align}

Next, replace the generators $b_{\ell,i}$ having $1\leq\ell\leq q-1$ and $2\leq i\leq p-1$ by corresponding generators $B_{\ell,i}:=b_{\ell,1}^{-1}b_{\ell,i}$. (In terms of Tietze transformations, this means we add generators $B_{\ell,i}$ and relators $B_{\ell,i} b_{\ell,i}^{-1} b_{\ell,1}$, and then eliminate the generators $b_{\ell,i}$ via substitutions $b_{\ell,i}=b_{\ell,1}B_{\ell,i}$.) With these operations, relators with $\ell=j=1$ in (\ref{relnueve}) can be written as $B_{1,i}^{-1} b_{1,1}^{-1} b_{2,1}^{-1} b_{1,1} a_{1,i} b_{1,1}^{-1} a_{2,1}$, which allows us to eliminate generators $B_{1,i}$ via the substitutions
$$
B_{1,i}=b_{1,1}^{-1} b_{2,1}^{-1} b_{1,1} a_{1,i} b_{1,1}^{-1} a_{2,1}, \text{ \ for \ } 2\leq i\leq p-1.
$$
Likewise, relators in (\ref{relocho}) can be written as $b_{\ell,1}a_{\ell+1,1}^{-1}a_{\ell+1,j}b_{\ell,1}^{-1}B_{\ell+1,j}^{-1}$, allowing us to eliminate generators $B_{\ell+1,j}$ via the substitutions
$$
B_{\ell+1,j}=b_{\ell,1}a_{\ell+1,1}^{-1}a_{\ell+1,j}b_{\ell,1}^{-1}, \text{ \ for \ } 1\leq\ell\leq q-2, \ 2\leq j\leq p-1.
$$
We are then left with a presentation of $B_{pq-2}(p\times q)$ with generators $a_{\ell,i}$ and $b_{\ell,1}$ for $1\leq\ell\leq q-1$ and $1\leq i\leq p-1$, and relators
{\footnotesize \begin{align}
[a_{1,j}\cc b_{1,1}^{-1} b_{2,1}^{-1} b_{1,1} a_{1,i} b_{1,1}^{-1} a_{2,1}],& \text{ \ for\hspace{.3mm} } 2\leq i<j\leq p-1, \label{reltrece}\\
[a_{\ell,j}\cc b_{\ell-1,1} a_{\ell,1}^{-1} a_{\ell,i} b_{\ell-1,1}^{-1} ],& \text{ \ for\hspace{.3mm} } 2\leq\ell\leq q-1, \ 2\leq i<j\leq p-1, \label{relcatorce}\\ 
b_{\ell,1} a_{\ell,i} b_{\ell,1}^{-1} a_{\ell+1,1} b_{\ell-1,1} a_{\ell,i}^{-1} a_{\ell,1} b_{\ell-1,1}^{-1} b_{\ell,1}^{-1} b_{\ell+1,1}^{-1},& \text{ \ for\hspace{.3mm} } 2\leq \ell\leq q-2, \ 2\leq i\leq p-1, \label{relquince} \\
b_{1,1} a_{1,i} b_{1,1}^{-1} a_{2,j} a_{2,1}^{-1} b_{1,1} a_{1,i}^{-1} b_{1,1}^{-1} b_{2,1} b_{1,1} a_{2,j}^{-1} a_{2,1} b_{1,1}^{-1} b_{2,1}^{-1},& \text{ \ for\hspace{.3mm} } 2\leq i\leq p-2, \ 2\leq j\leq p-i, \label{reldieciseis} \\
\hspace{-2.76mm}b_{\ell,1} a_{\ell,i} b_{\ell,1}^{-1} a_{\ell+1,j} b_{\ell-1,1} a_{\ell,i}^{-1} a_{\ell,1} b_{\ell-1,1}^{-1} a_{\ell+1,j}^{-1} a_{\ell+1,1} b_{\ell,1}^{-1} b_{\ell+1,1}^{-1}, & \text{ \ for\hspace{.3mm} } 2\leq\ell\leq q-2, \ 2\leq i\leq p-2, \ 2\leq j\leq p-i, \label{reldiecisiete}
\end{align}}\noindent
together with those in (\ref{a11}) and (\ref{reldiez})--(\ref{reldoce}), which have no change under the indicated Tietze transformations. Note that (\ref{reltrece}) and (\ref{relcatorce}) come from (\ref{nada}), whereas (\ref{relquince})--(\ref{reldiecisiete}) come from (\ref{relnueve}). 

\smallskip
For the next round of Tietze transformations, set $u_{\ell}:=b_{\ell,1}$ for $1\leq\ell\leq q-1$ and replace the generators $a_{\ell+1,1}$ ($1\leq\ell\leq q-2$) by corresponding generators $v_{\ell}:=u_{\ell}^{-1} a_{\ell+1,1}$. In addition, set $A_{1,i}:=a_{1,i}$ for $2\leq i\leq p-1$, and replace the generators $a_{\ell,i}$ with $2\leq\ell\leq q-1$ and $2\leq i\leq p-1$ by corresponding generators $A_{\ell,i}:=v_{\ell-1}^{-1} u_{\ell-1}^{-1} a_{\ell,i}$. This yields a presentation for $B_{pq-2}(p\times q)$ with generators
\begin{align}
A_{\ell,i} &\mbox{ for $1\leq\ell\leq q-1$ and $2\leq i\leq p-1$ (because of (\ref{a11}), $a_{1,1}$ is negligible),} \nonumber \\
u_\ell &\mbox{ for $1\leq \ell\leq q-1$, together with $v_\ell$ for $1\leq \ell\leq q-2$, \label{gensuyv}}
\end{align}
and relators
{\footnotesize\begin{align}
[A_{1,j}\cc u_1^{-1} u_2^{-1} u_1 A_{1,i} v_1],& \text{ \ for\hspace{.3mm} } 2\leq i<j\leq p-1, \label{reldieciocho}\\
[u_{\ell-1} v_{\ell-1} A_{\ell,j}\cc u_{\ell-1} A_{\ell,i} u_{\ell-1}^{-1}],& \text{ \ for\hspace{.3mm} } 2\leq\ell\leq q-1, \ 2\leq i<j\leq p-1, \label{reldiecinueve}\\
u_\ell u_{\ell-1} v_{\ell-1} A_{\ell,i} v_\ell u_{\ell-1} A_{\ell,i}^{-1} u_{\ell-1}^{-1} u_\ell^{-1} u_{\ell+1}^{-1},& \text{ \ for\hspace{.3mm} } 2\leq\ell\leq q-2, \ 2\leq i\leq p-1, \label{relveinte}\\
[u_1^{-1} u_2^{-1} u_1 A_{1,i} v_1 \cc A_{2,j}],& \text{ \ for\hspace{.3mm} } 2\leq i\leq p-2, \ 2\leq j\leq p-i, \label{relveintiuno}\\
u_\ell u_{\ell-1} v_{\ell-1} A_{\ell,i} v_\ell A_{\ell+1,j} u_{\ell-1} A_{\ell,i}^{-1} u_{\ell-1}^{-1} A_{\ell+1,j}^{-1} u_\ell^{-1} u_{\ell+1}^{-1},& \text{ \ for\hspace{.3mm} } 2\leq\ell\leq q-2, \ \ 2\leq i\leq p-2, \ 2\leq j\leq p-i, \label{relveintidos}
\end{align}}\noindent
and those coming from (\ref{reldiez})--(\ref{reldoce}) which, after the indicated transformations, become
{\footnotesize\begin{align}
[A_{1,i}\cc v_1 A_{2,j} u_1],& \text{ \ for\hspace{.3mm} } i,j\in\{3,4,\ldots,p-1\}, \ p+2\leq i+j, \label{relveintitres} \\
[u_{\ell-1} v_{\ell-1} A_{\ell,i} \cc v_\ell A_{\ell+1,j} u_\ell],& \text{ \ for\hspace{.3mm} } 2\leq \ell\leq q-2, \  i,j\in\{3,4,\ldots,p-1\}, \ p+2\leq i+j, \label{relveinticuatro} \\
[u_{\ell+1} u_\ell v_\ell^{-1} u_\ell^{-1} \cc u_{\lambda-1} v_{\lambda-1}],& \text{ \ for\hspace{.3mm} } 2\leq \ell+1<\lambda\leq q-1, \label{relveinticinco} \\
[u_{\ell+1} u_\ell v_\ell^{-1} u_\ell^{-1} \cc u_{\lambda-1}v_{\lambda-1}A_{\lambda,j}],& \text{ \ for\hspace{.3mm} } 2\leq \ell+1<\lambda\leq q-1, \ 2\leq j\leq p-1, \label{relveintiseis} \\
[ u_1 A_{1,i} u_1^{-1}\cc u_{\lambda-1} v_{\lambda-1}],& \text{ \ for\hspace{.3mm} } 3\leq\lambda\leq q-1, \ 2\leq i\leq p-1, \label{relveintisiete} \\
[ u_1 A_{1,i} u_1^{-1}\cc u_{\lambda-1}v_{\lambda-1}A_{\lambda,j}],& \text{ \ for\hspace{.3mm} } 3\leq\lambda\leq q-1, \ 2\leq i\leq p-1, \ 2\leq j \leq p-1, \label{relveintiocho} \\
[u_\ell u_{\ell-1} v_{\ell-1} A_{\ell,i} u_\ell^{-1} \cc u_{\lambda-1} v_{\lambda-1}],& \text{ \ for\hspace{.3mm} } 3\leq \ell+1<\lambda\leq q-1, \ 2\leq i\leq p-1, \label{relveintinueve} \\
[u_\ell u_{\ell-1} v_{\ell-1} A_{\ell,i} u_\ell^{-1} \cc u_{\lambda-1}v_{\lambda-1}A_{\lambda,j}],& \text{ \ for\hspace{.3mm} } 3\leq \ell+1<\lambda\leq q-1, \ 2\leq i\leq p-1, \ 2\leq j\leq p-1. \label{reltreinta}
\end{align}}\indent

Note that, in view of (\ref{relveinticinco}), (\ref{relveintisiete}) and (\ref{relveintinueve}), relators (\ref{relveintiseis}), (\ref{relveintiocho}) and (\ref{reltreinta}) simplify, respectively, to $[u_{\ell+1} u_\ell v_\ell^{-1} u_\ell^{-1} \cc A_{\lambda,j}]$, $[ u_1 A_{1,i} u_1^{-1}\cc A_{\lambda,j}]$ and $[u_\ell u_{\ell-1} v_{\ell-1} A_{\ell,i} u_\ell^{-1} \cc A_{\lambda,j}]$. Example~\ref{finq3} below illustrates aspects of the latter presentation for $q=3$, and completes the proof of Theorem~\ref{resultadocasiprincipal} in that case.
 
\begin{example}\label{finq3}{\em
Assume $q=3$, so that there are no relators (\ref{relveinte}) or (\ref{relveintidos}), and the previous description is that of a simple commutator related structure. By direct counting we see that the presentation has $2p-1$ generators and $2(p-3)(p-2)$ relators (note that the sets of relators (\ref{relveinticuatro})--(\ref{reltreinta}) are also empty for $q=3$). The dimension-1 case of the Hurewicz theorem then gives $H_1(\UC_{3p-2}(p\times3))\cong\mathbb{Z}^{2p-1}$, while the discussion around (\ref{soniguales}) shows that $H_2(\UC_{3p-2}(p\times3))$ is torsion-free, say of rank $\beta_2$, and that the Euler characteristic of $\UC_{3p-2}(p\times3)$ is given by $\chi=\beta_2-2(p-1)$. On the other hand, Lemma \ref{conteocritico} and \cite[Corollary 3.5]{MR1358614} show that $\chi=1-(6p-8)+\binom{2(p-1)}{2}-p+2$. The last two equalities yield $\beta_2=2(p-3)(p-2)$ which, as noted above, is the number of relators in the simple commutator-related presentation. Specializing to the case $p=q=3$, we get $B_{7}(3\times3)=F_5$, as asserted in Example \ref{stillraags}.1. In particular $\UC(7,3\times3)$ is homotopy equivalent to a wedge of 5 circles. However, for $p>q=3$, $\hdim(\UC(pq-2,p\times q))=2$, which is forced from (\ref{alomasdos}) and the non-triviality of the corresponding 2-dimensional homology group.
}\end{example}

In the rest of the section we assume $p\geq q\geq4$. By (\ref{relveinte}), relators (\ref{relveintidos}) can be expressed as relations $u_{\ell-1} A_{\ell,i}^{-1} u_{\ell-1}^{-1} = A_{\ell+1,j} u_{\ell-1} A_{\ell,i}^{-1} u_{\ell-1}^{-1} A_{\ell+1,j}^{-1}$ or, equivalently, as commutator-type relators
\begin{equation}\label{reltreintayuno}
A_{\ell+1,j} \cdot (u_{\ell-1} A_{\ell,i}^{-1} u_{\ell-1}^{-1}) \cdot A_{\ell+1,j}^{-1} \cdot(u_{\ell-1} A_{\ell,i}^{-1} u_{\ell-1}^{-1})^{-1},
\end{equation}
holding in the same range of values for $\ell$, $i$ and $j$ as that indicated in (\ref{relveintidos}). We then use a final round of Tietze transformations in order to simplify the only one type of relators that have not yet been written down as commutators, namely relators (\ref{relveinte}). Explicitly, the case $i=2$ of relators (\ref{relveinte}) allows us to recursively eliminate all generators $u_{\ell+1}$ with $\ell=q-2,q-3,\ldots,2$ (in that order) via the substitutions
\begin{equation}\label{lastround}
u_{\ell+1}=u_\ell u_{\ell-1} v_{\ell-1} A_{\ell,2} v_\ell u_{\ell-1} A_{\ell,2}^{-1} u_{\ell-1}^{-1} u_\ell^{-1}.
\end{equation}
Any such substitution applied on a relator that has already been written down as a commutator does not change the commutator characteristic\footnote{The resulting substituted commutator might be trivial, i.e., of the form $[a,a]$. But the minimality of the final presentation (Corollary \ref{eficiencia}) will prevent such a possibility.}. Furthermore, for a fixed value of $\ell\in\{2,3,\ldots,q-2\}$, direct verification gives that a relator $u_\ell u_{\ell-1} v_{\ell-1} A_{\ell,i} v_\ell u_{\ell-1} A_{\ell,i}^{-1} u_{\ell-1}^{-1} u_\ell^{-1} u_{\ell+1}^{-1}$ with $3\leq i\leq p-1$ is transformed under the substitution in (\ref{lastround}) into (the $u_\ell u_{\ell-1} v_{\ell-1}$ conjugate of) the relator $A_{\ell,i} v_\ell u_{\ell-1}A_{\ell,i}^{-1} A_{\ell,2} u_{\ell-1}^{-1} v_\ell^{-1} A_{\ell,2}^{-1}$ or, equivalently, the commutator-type relator
$$
v_\ell u_{\ell-1}\cdot A_{\ell,i}^{-1} A_{\ell,2}\cdot u_{\ell-1}^{-1} v_\ell^{-1}\cdot A_{\ell,2}^{-1} A_{\ell,i}.
$$
Consequently this last round of Tietze transformations renders a simple commutator-related presentation of $B_{pq-2}(p\times q)$ having $\beta_1:=(p-1)(q-1)+1$ generators, namely $u_1$, $u_2$ and $v_\ell$ for $1\leq \ell\leq q-2$, as well as $A_{\ell,i}$ for $1\leq\ell\leq q-1$ and $2\leq i\leq p-1$, and by direct counting
$$\beta_2:=\frac{p^2q^2+p^2+q^2-2pq^2-2p^2q-3pq+7p+7q-6}{2}>0$$
relators (see Table \ref{conteodirecto}, where the reported number of relators of type (\ref{relveinte}) takes into account the relators that are eliminated during the last round of Tietze transformations). The fact that $\beta_1$ and $\beta_2$ agree with the corresponding Betti numbers of $\UC_{pq-2}(p\times q)$ follows from an arithmetic verification that is identical to the one used in Example \ref{finq3}. This completes the proof of Theorem \ref{resultadocasiprincipal}.

\begin{table}[h!]
\caption{Number of relators by type}
\label{conteodirecto}
\hspace{1cm}\begin{tabular}{|c|c|c|c|c|c|}
\hline
type & (\ref{reldieciocho}) & (\ref{reldiecinueve}) & (\ref{relveinte}) & (\ref{relveintiuno}) & (\ref{relveintidos})=(\ref{reltreintayuno})  \\ \hline
\raisebox{1mm}{amount} & 
\rule{0mm}{6mm}\raisebox{1mm}{$\frac{(p-3)(p-2)}2$} & 
\rule{0mm}{6mm}\raisebox{1mm}{$\frac{(p-3)(p-2)(q-2)}2$} & 
\rule{0mm}{6mm}\raisebox{1mm}{$(p-3)(q-3)$} & 
\rule{0mm}{6mm}\raisebox{1mm}{$\frac{(p-3)(p-2)}2$} & 
\rule{0mm}{6mm}\raisebox{1mm}{$\frac{(p-3)(p-2)(q-3)}2$} \\
\hline
\end{tabular}

\medskip
\hspace{1cm}\begin{tabular}{|c|c|c|c|c|c|}
\hline 
type & (\ref{relveintitres}) & (\ref{relveinticuatro}) & (\ref{relveinticinco}) & (\ref{relveintiseis}) & (\ref{relveintisiete}) \\ \hline
\raisebox{1mm}{amount} & 
\rule{0mm}{6mm}\raisebox{1mm}{$\frac{(p-3)(p-2)}2$} & 
\rule{0mm}{6mm}\raisebox{1mm}{$\frac{(p-3)(p-2)(q-3)}2$} &  
\rule{0mm}{6mm}\raisebox{1mm}{$\frac{(q-3)(q-2)}2$} &  
\rule{0mm}{6mm}\raisebox{1mm}{$\frac{(p-2)(q-3)(q-2)}2$} &
\rule{0mm}{6mm}\raisebox{1mm}{$(p-2)(q-3)$} \\
\hline
\end{tabular}

\medskip
\hspace{1cm}\begin{tabular}{|c|c|c|c|}
    \hline 
    type & (\ref{relveintiocho}) & (\ref{relveintinueve}) & (\ref{reltreinta}) \\ \hline
\raisebox{1mm}{amount} & 
\rule{0mm}{6mm}\raisebox{1mm}{$(p-2)^2(q-3)$} &
\rule{0mm}{6mm}\raisebox{1mm}{$\frac{(p-2)(q-4)(q-3)}2$} &  
\rule{0mm}{6mm}\raisebox{1mm}{$\frac{(p-2)^2(q-4)(q-3)}2$}\\
\hline
\end{tabular}
\end{table}

\section{Commutators and conjugations: The HNN extension}
In this section we organize the presentation of $B_{3p-2}(p\times3)$ in Example \ref{finq3}. Start by using the simplified notation $u$ and $v$ for the generators $u_1$ and $v_1$ in (\ref{gensuyv}). Additionally, replace the generator $u_2$ by a new generator $w:=u^{-1}u_2u$. In these terms, relators (\ref{reldieciocho})--(\ref{reltreinta}) simplify to the four types of relators
\begin{align}
[w^{-1} A_{1,i} v \cc A_{1,j}],& \text{ \ for\hspace{.3mm} } 2\leq i<j\leq p-1, \label{treintaycinco} \\
[w^{-1} A_{1,i} v \cc A_{2,j}],& \text{ \ for\hspace{.3mm} } 2\leq i\leq p-2, \ 2\leq j\leq p-i, \label{treintayseis} \\
[vA_{2,j}u\cc A_{2,i}],& \text{ \ for\hspace{.3mm} } \ 2\leq i<j\leq p-1, \label{cjgd}\\
[v A_{2,j} u \cc A_{1,i}],& \text{ \ for\hspace{.3mm} } i,j\in\{3,4,\ldots,p-1\}, \ p+2\leq i+j, \label{treintaysiete}
\end{align}
where relators in (\ref{cjgd}) have been written down as $u$-conjugates of those in (\ref{reldiecinueve}). For example, with $p=4$, this presentation yields the first assertion in Example \ref{stillraags}.2 after the generator $A_{1,2}$ is replaced by $A'_{1,2}:=w^{-1}A_{1,2}v$, and the generator $A_{2,3}$ is replaced by $A'_{2,3}:=vA_{2,3}u$. Actually, since the first assertion in Example \ref{stillraags}.1 is covered by Example \ref{finq3}, we will assume $p\geq5$ throughout this section. Additionally, for elements $x$ and $y$ of a given group, we will use the notation $x\,\&\,y$ as a substitute of the commutation relation $[x,y]=1$. Likewise, when the notation $x\,\&\,y$ is used as a relator, we mean $[x,y]$.

\smallskip
The easy verification of the following result is left as an exercise for the reader.
\begin{lemma}\label{changebasis}
Assume $a_2,a_3,\ldots,a_{k-1}$ and $b_2,b_3\ldots,b_{k-1}$ are elements of a given group and set $a_1=1=b_k$. For $2\leq i\leq k-1$, choose elements 
$$
\mbox{$A_i\in\{a_i^{\pm1},a_{i-1}^{\pm1}a_i^{\pm1}, a_i^{\pm1}a_{i-1}^{\pm1}\}$ \ and \ \hspace{.3mm}$B_i\in\{b_i^{\pm1},b_{i+1}^{\pm1}b_i^{\pm1}, b_i^{\pm1}b_{i+1}^{\pm1}\}$.}
$$
Then $a_i\,\&\,b_j$ for $2\leq i<j\leq k-1$ if and only if $A_i\,\&\,B_j$ for $2\leq i<j\leq k-1$.
\end{lemma}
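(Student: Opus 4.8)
The plan is to recognize Lemma \ref{changebasis} as a statement about a \emph{triangular} change of basis, and to exploit the following support principle. Each $A_i$ is a word in the two elements $a_{i-1}$ and $a_i$ (with the convention $a_1=1$), so its $a$-letters all carry indices $\le i$; dually, each $B_j$ is a word in $b_j$ and $b_{j+1}$ (with $b_k=1$), so its $b$-letters all carry indices $\ge j$. The only auxiliary fact needed is the elementary one that if every letter of a word $W$ commutes with every letter of a word $W'$, then $[W,W']=1$, together with the remark that the centralizer of an element is a subgroup. First I would record these observations.

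For the forward implication (the $a,b$-relations imply the $A,B$-relations), I would fix $2\le i<j\le k-1$ and note that $A_i$ involves only letters $a_m$ with $m\in\{i-1,i\}$, while $B_j$ involves only letters $b_n$ with $n\in\{j,j+1\}$. Every such pair satisfies $m\le i<j\le n$, hence $m<n$; when both letters are nontrivial, i.e. $m\ge 2$ and $n\le k-1$, we have $2\le m<n\le k-1$, a pair covered by the hypothesis $a_m\,\&\,b_n$, while if $m=1$ or $n=k$ the corresponding letter is the identity and commutes trivially. Letter-wise commutation then yields $[A_i,B_j]=1$.

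For the reverse implication I would invert the change of basis. Since $a_1=1$, the element $A_2$ equals $a_2^{\pm1}$, so $a_2\in\langle A_2\rangle$; inducting on $i$, the three displayed forms of $A_i$ each let me solve for $a_i$ as a word in $a_{i-1}$ and $A_i$, so $a_i$ is a product of $A_2,\ldots,A_i$, using only indices $\le i$. Symmetrically, starting from $B_{k-1}=b_{k-1}^{\pm1}$ (because $b_k=1$) and inducting downward, each $b_j$ is a product of $B_j,\ldots,B_{k-1}$, using only indices $\ge j$. Then for $i<j$, every factor $A_m$ of $a_i$ and every factor $B_n$ of $b_j$ satisfies $m\le i<j\le n$, hence $m<n$ and $[A_m,B_n]=1$ by hypothesis; since the centralizer of an element is a subgroup, $a_i$ commutes with each $B_n$, and hence with their product $b_j$.

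The two directions are perfectly symmetric: both rest on the single fact that the $A$-basis spreads the $a$-support only downward and the $B$-basis spreads the $b$-support only upward, so the gap $i<j$ is never bridged. The only point requiring care — and the one I would state precisely — is the index bookkeeping at the boundary: the conventions $a_1=1$ and $b_k=1$ are exactly what kill the one potentially out-of-range letter in $A_2$ and $B_{k-1}$, making the triangular systems solvable, and what keep the ``reach'' to index $i-1$ or $j+1$ harmlessly inside the commuting region. No genuine obstacle arises; the entire content is the triangularity of the substitutions and the slack provided by the strict inequality $i<j$.
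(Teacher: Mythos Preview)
Your argument is correct: the triangular support analysis (each $A_i$ uses only $a$-letters with index $\le i$, each $B_j$ only $b$-letters with index $\ge j$, and the boundary conventions $a_1=1=b_k$ make the change of variables invertible in the same triangular pattern) is exactly what is needed, and both implications follow from the elementary fact that letter-wise commutation propagates to words. The paper does not give a proof of this lemma at all---it leaves the verification as an exercise---so there is no alternative approach to compare against; your write-up is the natural argument and almost certainly the one the authors had in mind.
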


We apply Lemma \ref{changebasis} with $k=p$ in the following situations.
\begin{itemize}
\item Set $a_i:=w^{-1}A_{1,i}v$ and $b_j:=A_{1,j}$, so that relators (\ref{treintaycinco}) can be written in the form
\begin{equation}\label{39}
\begin{aligned}
w^{-1}A_{1,2}v \;\;\&\;\; A_{1,j}^{-1}A_{1,j+1}, & \text{ \ for } 2<j\leq p-1,\\ v^{-1}A_{1,i-1}^{-1}A_{1,i}v \;\;\&\;\;A_{1,j}^{-1}A_{1,j+1}, & \text{ \ for } 3\leq i<j\leq p-1.
\end{aligned}
\end{equation}
\item Relators (\ref{treintayseis}) can be written in the form $[w^{-1}A_{1,i}v\cc A_{2,p+1-j}]$ for $2\leq i<j\leq p-1$ or, by setting $a_i:=w^{-1}A_{1,i}v$ and $b_j:=A_{2,p+1-j}$, in the form
\begin{equation}\label{40}
\begin{aligned}
w^{-1}A_{1,2}v \;\;\&\;\; A_{2,p+1-j}A_{2,p-j}^{-1}, & \text{ \ for } 2<j\leq p-1,\\ v^{-1}A_{1,i-1}^{-1}A_{1,i}v \;\;\&\;\;A_{2,p+1-j}A_{2,p-j}^{-1}, & \text{ \ for } 3\leq i<j\leq p-1.
\end{aligned}
\end{equation}
\item Set $a_i:=A_{2,i}$ and $b_j:=vA_{2,j}u$, so that relators (\ref{cjgd}) can be written in the form
\begin{equation}\label{41}
\begin{aligned}
A_{2,i}A_{2,i-1}^{-1} \;\;\&\;\; v A_{2,p-1}u, \hspace{1.075cm}& \text{ \ for } 2\leq i<p-1,\\ A_{2,i}A_{2,i-1}^{-1} \;\;\&\;\; v A_{2,j+1}A_{2,j}^{-1}v^{-1}, & \text{ \ for } 2\leq i<j\leq p-2.
\end{aligned}
\end{equation}
\item Relators (\ref{treintaysiete}) can be written in the form $[vA_{2,p+1-i}u\cc A_{1,j}]$ for $2\leq i<j\leq p-1$ or, by setting $a_i:=vA_{2,p+1-i}u$ and $b_j:=A_{1,j}$, in the form
\begin{equation}\label{42}
\begin{aligned}
vA_{2,p-1}u \;\;\&\;\; A_{1,j}^{-1}A_{1,j+1}, & \text{ \ for } 2<j\leq p-1,\\ vA_{2,p+2-i}A_{2,p+1-i}^{-1}v^{-1} \;\;\&\;\;A_{1,j}^{-1}A_{1,j+1}, & \text{ \ for } 3\leq i<j\leq p-1.
\end{aligned}
\end{equation}
\end{itemize}
As indicated in Lemma \ref{changebasis}, here we set $A_{1,p}=1=A_{2,1}$. All together, we get:

\begin{corollary}\label{abcd}
For $1\leq i\leq p-3$, consider the elements $\alpha(i)$, $\beta(i)$, $\gamma(i)$ and $\delta(i)$ given by
\begin{itemize}
\item $\alpha(i):=A_{2,p-i-1}A_{2,p-i-2}^{-1}$ \ \text{\em{(recall $A_{2,1}=1$),}}
\item $\beta(i):=\begin{cases}
w^{-1}A_{1,2}v, & i=p-3; \\
v^{-1}A_{1,p-i-2}^{-1}A_{1,p-i-1}v, & 1\leq i\leq p-4,
\end{cases}$
\item $\gamma(i):=A_{1,i+2}^{-1}A_{1,i+3}$  \ \text{\em{(recall $A_{1,p}=1$),}}
\item $\delta(i):=\begin{cases}
vA_{2,p-1}u, & i=p-3; \\
vA_{2,i+3}A_{2,i+2}^{-1}v^{-1}, & 1\leq i\leq p-4.
\end{cases}$
\end{itemize}
Then $B_{3p-2}(p\times3)$ is presented by generators $u$, $v$, $w$, $A_{1,i}$ and $A_{2,i}$ for $2\leq i\leq p-1$, and commutation relations $\alpha(i) \;\&\; \beta(j)$, $\beta(i) \;\&\; \gamma(j)$, $\gamma(i) \;\&\; \delta(j)$ and $\delta(i) \;\&\; \alpha(j)$ holding for $i,j\in\{1,2,\ldots,p-3\}$ whenever $i+j>p-3$.
\end{corollary}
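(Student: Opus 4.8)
The plan is to obtain Corollary \ref{abcd} by collecting and reindexing the four rewritten relator families (\ref{39})--(\ref{42}); no further group theory is needed, only bookkeeping. Recall that after the substitutions $u=u_1$, $v=v_1$, $w=u^{-1}u_2u$ the group $B_{3p-2}(p\times3)$ is presented by the generators $u,v,w,A_{1,i},A_{2,i}$ (for $2\le i\le p-1$) together with the four families (\ref{treintaycinco})--(\ref{treintaysiete}), and that applying Lemma \ref{changebasis} with $k=p$ (under the conventions $A_{1,p}=1=A_{2,1}$) has already turned these into the commutation relations (\ref{39})--(\ref{42}). First I would define $\alpha,\beta,\gamma,\delta$ exactly as in the statement and check that the elements occurring in (\ref{39})--(\ref{42}) are precisely these four families: writing each relation as a commutation between an ``$a$-side'' entry and a ``$b$-side'' entry, the $\beta(\cdot)$ occupy the $a$-sides of (\ref{39}) and (\ref{40}), the $\gamma(\cdot)$ the $b$-sides of (\ref{39}) and (\ref{42}), the $\alpha(\cdot)$ the $b$-side of (\ref{40}) and the $a$-side of (\ref{41}), and the $\delta(\cdot)$ the $b$-side of (\ref{41}) and the $a$-side of (\ref{42}). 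Consequently (\ref{39}), (\ref{40}), (\ref{41}), (\ref{42}) are, respectively, the families $\beta\,\&\,\gamma$, $\alpha\,\&\,\beta$, $\delta\,\&\,\alpha$ and $\gamma\,\&\,\delta$, which is exactly the cyclic ``square'' pattern recorded in the corollary.

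The second step is the explicit change of indices, carried out family by family. In each of (\ref{39})--(\ref{42}) the index set produced by Lemma \ref{changebasis} is the triangular set $\{(i,j):2\le i<j\le p-1\}$, with $i$ the $a$-index and $j$ the $b$-index. For (\ref{39}) I would assign to the $a$-side entry (a $\beta$) the parameter $i'=p-1-i$ and to the $b$-side entry (a $\gamma$) the parameter $j'=j-2$; this is a bijection onto $\{(i',j'):1\le i',j'\le p-3\}$, under which the defining constraint transforms as $i<j\iff p-1-i'<j'+2\iff i'+j'>p-3$. Entirely analogous assignments---sending the relevant $a$-index $i$ to $p-1-i$ and the relevant $b$-index $j$ to $j-2$---handle (\ref{40}), (\ref{41}) and (\ref{42}), and in every case the single inequality $i<j$ translates uniformly into $i+j>p-3$ on the new parameters, which is exactly the range quantifier in the corollary. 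The ``degenerate'' parameter $i'=p-3$ of $\beta$ and of $\delta$ corresponds to the boundary entries $w^{-1}A_{1,2}v$ and $vA_{2,p-1}u$ that arise from the conventions $a_1=1$, $b_p=1$ in the lemma, so the two-line case split in the definitions of $\beta$ and $\delta$ is automatic.

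The point requiring genuine care---and the step I expect to be the main obstacle---is verifying that each of these four substitutions is a bijection between the triangular index set of the lemma and the ``anti-triangular'' set $\{i+j>p-3\}$ of the corollary, so that no relation is silently dropped or duplicated, and that the direction of the inequality reverses correctly in all four cases at once. This is precisely the content of the observation that the triangular relations coming out of Lemma \ref{changebasis} reassemble into the four edges of a square. Tracking of signs is harmless here: with the particular representatives recorded in (\ref{39})--(\ref{42}) each $a$-side and $b$-side entry coincides on the nose with the corresponding $\alpha(i')$, $\beta(i')$, $\gamma(j')$ or $\delta(j')$ (and in any case $[x,y]=1\iff[x^{-1},y]=1$ would absorb any discrepancy allowed by the freedom in Lemma \ref{changebasis}). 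Since the generating set is untouched and the relators are placed in explicit bijection with the families $\alpha\,\&\,\beta$, $\beta\,\&\,\gamma$, $\gamma\,\&\,\delta$, $\delta\,\&\,\alpha$ restricted to $i+j>p-3$, the presentation asserted in Corollary \ref{abcd} follows.
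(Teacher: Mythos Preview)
Your proposal is correct and follows exactly the paper's approach: the paper's proof is the one-line observation that the commutator relations $\alpha(i)\,\&\,\beta(j)$, $\beta(i)\,\&\,\gamma(j)$, $\gamma(i)\,\&\,\delta(j)$ and $\delta(i)\,\&\,\alpha(j)$ are precisely those in (\ref{40}), (\ref{39}), (\ref{42}) and (\ref{41}), respectively, and you have simply spelled out the reindexing bijections and range checks that justify that identification.
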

\begin{proof}
The asserted commutator relations $\alpha(i) \,\&\, \beta(j)$, $\beta(i) \,\&\, \gamma(j)$, $\gamma(i) \,\&\, \delta(j)$ and $\delta(i) \,\&\, \alpha(j)$ are those in (\ref{40}), (\ref{39}), (\ref{42}) and (\ref{41}), respectively.
\end{proof}

\begin{lemma}\label{auxi}
The following relations hold in $B_{3p-2}(p\times3)$\emph{:}
\begin{enumerate}[(i)]
\item\label{eqi}
\ $\alpha(i)\alpha(i+1)\cdots\alpha(p-3)=A_{2,p-i-1}$, for $1\leq i\leq p-3$.
\item\label{eqii}
\ $v^{-1}\delta(p-4)v\alpha(1)\alpha(2)\cdots\alpha(p-3) =A_{2,p-1}$.
\item\label{eqiii}
\ $\gamma(p-3)^{-1} \gamma(p-4)^{-1} \cdots \gamma(i)^{-1}=A_{1,i+2}$, for $1\leq i\leq p-3$.
\item\label{eqiv}
\ $\gamma(p-3)^{-1}\gamma(p-4)^{-1}\cdots \gamma(1)^{-1}v\beta(p-4)^{-1}v^{-1}=A_{1,2}$.
\item\label{eqv}
\ $\alpha(p-3)^{-1}\alpha(p-4)^{-1}\cdots\alpha(1)^{-1}v^{-1}\delta(p-4)^{-1}\delta(p-3)=u$.
\item\label{eqvi}
\ $\gamma(p-3)^{-1}\gamma(p-4)^{-1}\cdots\gamma(1)^{-1}v\beta(p-4)^{-1}\beta(p-3)^{-1}=w$.
\item\label{conjugaciones1}
\ $v \beta(p-i-4) v^{-1}=\gamma(i)$, for $1\leq i\leq p-5$.
\item\label{conjugaciones2}
\ $v \alpha(p-i-4) v^{-1}=\delta(i)$, for $1\leq i\leq p-5$.
\end{enumerate}
\end{lemma}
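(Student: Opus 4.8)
The plan is to observe that every one of the eight identities is a purely formal consequence of the definitions of $\alpha,\beta,\gamma,\delta$ recorded in Corollary~\ref{abcd}, together with the two boundary conventions $A_{2,1}=1$ and $A_{1,p}=1$. In particular, none of the identities requires invoking a defining relation of $B_{3p-2}(p\times3)$: each already holds as an identity of words in the free group on the listed generators $u,v,w,A_{1,i},A_{2,i}$. So the entire task reduces to careful expansion and cancellation, and the only real work is index bookkeeping.

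First I would establish the two telescoping identities (i) and (iii), which power everything else. For (i), substituting $\alpha(k)=A_{2,p-k-1}A_{2,p-k-2}^{-1}$ makes consecutive factors cancel, so that $\alpha(i)\alpha(i+1)\cdots\alpha(p-3)$ collapses to $A_{2,p-i-1}A_{2,1}^{-1}=A_{2,p-i-1}$ by the convention $A_{2,1}=1$. Symmetrically, (iii) follows by telescoping the factors $\gamma(k)^{-1}=A_{1,k+3}^{-1}A_{1,k+2}$ down to $A_{1,p}^{-1}A_{1,i+2}=A_{1,i+2}$, using $A_{1,p}=1$. I would also record the reversed/inverse versions obtained by inverting these products; in particular, inverting the $i=1$ case of (i) gives $\alpha(p-3)^{-1}\cdots\alpha(1)^{-1}=A_{2,p-2}^{-1}$, which is exactly what (v) needs.

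Next I would derive (ii), (iv), (v) and (vi) by combining the telescoped products (specialized to $i=1$) with the \emph{boundary-case} definitions of $\beta$ and $\delta$. For example, the definition of $\delta(p-4)$ gives $v^{-1}\delta(p-4)v=A_{2,p-1}A_{2,p-2}^{-1}$, and multiplying on the right by $\alpha(1)\cdots\alpha(p-3)=A_{2,p-2}$ yields (ii). Identities (iv)--(vi) are handled the same way, using $\beta(p-4)^{-1}=v^{-1}A_{1,3}^{-1}A_{1,2}v$ together with the special definitions $\beta(p-3)=w^{-1}A_{1,2}v$ and $\delta(p-3)=vA_{2,p-1}u$ to produce the generators $A_{1,2}$, $u$ and $w$ on the right-hand sides; for (v) one additionally feeds in the reversed product $A_{2,p-2}^{-1}$ recorded above. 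Finally, (vii) and (viii) are immediate: evaluating the \emph{generic} formulas for $\beta$ and $\alpha$ at argument $p-i-4$ (which lies in the range $1\leq p-i-4\leq p-5$ precisely when $1\leq i\leq p-5$) and conjugating by $v$ reproduces verbatim the definitions of $\gamma(i)$ and $\delta(i)$.

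Honestly, there is no genuine obstacle here; the substance is index bookkeeping and consistent use of the two boundary conventions. The one point demanding care is the interface between the generic and the special cases at the ends of the telescopes: in (iv) and (vi) one must feed in $\beta(p-3)=w^{-1}A_{1,2}v$ rather than the generic formula for $\beta$, and in (ii) and (v) one must feed in $\delta(p-3)=vA_{2,p-1}u$ rather than the generic formula for $\delta$. Verifying that these endpoint substitutions mesh correctly with the generically telescoped products is the sole subtlety, and it becomes routine once $A_{2,1}=1$ and $A_{1,p}=1$ are tracked consistently throughout.
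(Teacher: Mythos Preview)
Your proposal is correct and matches the paper's (implicit) treatment: the lemma is stated without proof precisely because each identity is a formal consequence of the definitions in Corollary~\ref{abcd} together with the conventions $A_{2,1}=1$ and $A_{1,p}=1$, exactly as you outline via telescoping and endpoint substitution. No defining relation of $B_{3p-2}(p\times3)$ is needed, and your index bookkeeping (in particular the ranges ensuring the generic formulas for $\beta$ and $\delta$ apply in (vii) and (viii)) is accurate.
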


Extensive calculations based on Tietze transformations led the authors to realize that the commutation relations in Corollary \ref{abcd} and relations {\em(\ref{conjugaciones1})} and {\em(\ref{conjugaciones2})} in Lemma \ref{auxi} capture a description of $B_{3p-2}(p\times3)$ as an HNN RAAG-extension. For the sake of brevity, such a fact will be proved below through a direct argument that avoids the use of a long sequence of Tietze transformations. In what follows $H_p$ stands for the group presented by generators $V$, $A(i)$, $B(i)$, $C(i)$ and $D(i)$, for $1\leq i\leq p-3$, and relators/relations
\begin{align}
&\mbox{\small $A(i) \;\&\; B(j)$, $B(i) \;\&\; C(j)$, $C(i) \;\&\; D(j)$, $D(i) \;\&\; A(j)$, for $i,j\in\{1,\ldots,p-3\}$, $i+j>p-3,$} \label{lasdeconmutacion} \\
&\mbox{\small $V B(p-i-4) V^{-1}=C(i)$ and $V A(p-i-4) V^{-1}=D(i)$, for $1\leq i\leq p-5$.}\label{lasdeconjugacion}
\end{align}

\begin{theorem}\label{theta}
The group morphism $\theta\colon H_p\to B_{3p-2}(p\times3)$ given by $\theta(V)=v$, $\theta(A(i))=\alpha(i)$, $\theta(B(i))=\beta(i)$, $\theta(C(i))=\gamma(i)$ and $\theta(D(i))=\delta(i)$ for $1\leq i\leq p-3$ is a well-defined isomorphism.
\end{theorem}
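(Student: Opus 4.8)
The plan is to prove that $\theta$ is an isomorphism by constructing an explicit two-sided inverse $\psi\colon B_{3p-2}(p\times3)\to H_p$, thereby replacing the HNN/RAAG identification by a pair of finite verifications inside the two given presentations. Well-definedness of $\theta$ is the first and easiest step: the relators presenting $H_p$ are only those in (\ref{lasdeconmutacion}) and (\ref{lasdeconjugacion}), and their $\theta$-images are, respectively, the commutation relations of Corollary \ref{abcd} and the relations (\ref{conjugaciones1})--(\ref{conjugaciones2}) of Lemma \ref{auxi}, all of which hold in $B_{3p-2}(p\times3)$. Surjectivity of $\theta$ is equally immediate: Lemma \ref{auxi}(\ref{eqi})--(\ref{eqvi}) exhibit each of the generators $u,v,w,A_{1,i},A_{2,i}$ of $B_{3p-2}(p\times3)$ as a word in $v,\alpha(i),\beta(i),\gamma(i),\delta(i)$, i.e.\ in the $\theta$-images of the generators of $H_p$.

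Next I would define $\psi$ on the generators of $B_{3p-2}(p\times3)$ by reading the right-hand sides of Lemma \ref{auxi}(\ref{eqi})--(\ref{eqvi}) as words in $H_p$, under the symbolic substitution $v\mapsto V$, $\alpha\mapsto A$, $\beta\mapsto B$, $\gamma\mapsto C$, $\delta\mapsto D$. Each generator of $B_{3p-2}(p\times3)$ then receives a single unambiguous word (for instance $\psi(A_{2,p-1})=V^{-1}D(p-4)V\,A(1)\cdots A(p-3)$ from (\ref{eqii}), and $\psi(u)=A(p-3)^{-1}\cdots A(1)^{-1}V^{-1}D(p-4)^{-1}D(p-3)$ from (\ref{eqv})), so $\psi$ is well-defined on the free group on those generators. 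The real content is to show that $\psi$ annihilates the relators of Corollary \ref{abcd}. For this I would establish the four identities $\psi(\alpha(i))=A(i)$, $\psi(\gamma(i))=C(i)$, $\psi(\beta(i))=B(i)$ and $\psi(\delta(i))=D(i)$ in $H_p$, for $1\le i\le p-3$. Granting them, each commutator relator of Corollary \ref{abcd} maps under $\psi$ to one of $[A(i),B(j)]$, $[B(i),C(j)]$, $[C(i),D(j)]$, $[D(i),A(j)]$ with $i+j>p-3$, i.e.\ to a relator (\ref{lasdeconmutacion}) of $H_p$; hence $\psi$ descends to a homomorphism.

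The identities $\psi(\alpha(i))=A(i)$ and $\psi(\gamma(i))=C(i)$ are purely formal telescoping cancellations valid in any group, since $\alpha(i)$ and $\gamma(i)$ are consecutive quotients of the telescoping products (\ref{eqi}) and (\ref{eqiii}). The identities for $\beta$ and $\delta$ are where the HNN structure is genuinely used. For $1\le i\le p-5$ one rewrites $\beta(i)=v^{-1}\gamma(p-i-4)v$ and $\delta(i)=v\alpha(p-i-4)v^{-1}$, so that $\psi(\beta(i))=V^{-1}C(p-i-4)V$ and $\psi(\delta(i))=VA(p-i-4)V^{-1}$; the conjugation relations (\ref{lasdeconjugacion}) then collapse these to $B(i)$ and $D(i)$ exactly. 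The two boundary indices $i\in\{p-4,p-3\}$ do not fall under (\ref{lasdeconjugacion}) and must be treated separately, using the special formulas (\ref{eqii}), (\ref{eqv}) for $A_{2,p-1}$ and $u$ (for $\delta$) and (\ref{eqiv}), (\ref{eqvi}) for $A_{1,2}$ and $w$ (for $\beta$); in each case the stray $A$- or $C$-telescopes cancel and the single conjugating letter $V$ cancels against its inverse, leaving $D(p-4),D(p-3)$ and $B(p-4),B(p-3)$. These four boundary computations are the main obstacle: they are the only place where the special generators $u$ and $w$ and the full force of the conjugation relations intervene, and the index arithmetic must be tracked carefully.

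Finally I would assemble the composites. The same four identities together with $\psi(v)=V$ show that $\psi\circ\theta$ fixes each generator $V,A(i),B(i),C(i),D(i)$ of $H_p$, so $\psi\circ\theta=\mathrm{id}_{H_p}$; in particular $\theta$ is injective. Combined with the surjectivity already noted, $\theta$ is an isomorphism. Equivalently, $\theta\circ\psi=\mathrm{id}$ follows directly from Lemma \ref{auxi}(\ref{eqi})--(\ref{eqvi}), since $\theta\psi$ returns each generator of $B_{3p-2}(p\times3)$ to the left-hand side of the corresponding identity there, confirming $\psi=\theta^{-1}$.
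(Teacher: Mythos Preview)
Your proposal is correct and follows essentially the same approach as the paper: the paper also constructs an explicit inverse (called $\Theta$ there, defined by the same formulas (\ref{eqi})--(\ref{eqvi}) you use), verifies the identities $\Theta(\alpha(i))=A(i)$, $\Theta(\beta(i))=B(i)$, $\Theta(\gamma(i))=C(i)$, $\Theta(\delta(i))=D(i)$ (its relations (I)--(VIII), with the conjugation relations (\ref{lasdeconjugacion}) invoked precisely for the $\beta$ and $\delta$ cases with $i\le p-5$), and concludes that $\Theta$ and $\theta$ are mutual inverses. Your separation of the boundary indices $i\in\{p-4,p-3\}$ and your observation that the $\alpha,\gamma$ identities are pure telescoping while the $\beta,\delta$ identities genuinely need (\ref{lasdeconjugacion}) match the paper's organization exactly.
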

\begin{proof}
Corollary \ref{abcd} and relations {\em(\ref{conjugaciones1})} and {\em(\ref{conjugaciones2})} in Lemma \ref{auxi} show that $\theta$ is well-defined. We show that $\theta$ is an isomorphism by constructing its inverse. Consider the elements of $H_p$
\begin{align}
\Theta(A_{2,p-i-1})&:=A(i)A(i+1)\cdots A(p-3), \text{ for }1\leq i\leq p-3, \label{45} \\
\Theta(A_{2,p-1})&:=V^{-1}D(p-4)VA(1)A(2)\cdots A(p-3), \\
\Theta(A_{1,i+2})&:=C(p-3)^{-1}C(p-4)^{-1}\cdots C(i)^{-1}, \text{ for }1\leq i\leq p-3, \\
\Theta(A_{1,2})&:=C(p-3)^{-1}C(p-4)^{-1}\cdots C(1)^{-1}VB(p-4)^{-1}V^{-1}, \\
\Theta(u)&:=A(p-3)^{-1}A(p-4)^{-1}\cdots A(1)^{-1}V^{-1}D(p-4)^{-1}D(p-3), \\
\Theta(w)&:=C(p-3)^{-1}C(p-4)^{-1}\cdots C(1)^{-1}VB(p-4)^{-1}B(p-3)^{-1}, \\
\Theta(v)&:=V. \label{51}
\end{align}
Relations (\ref{(uno)})--(\ref{(ocho)}) below follow directly from definitions (\ref{45})--(\ref{51}), with relations (\ref{lasdeconjugacion}) relevant for the verification of (\ref{(cuatro)}) and (\ref{(ocho)}) when $i\leq p-5$. 
\begin{enumerate}[(I)]
\item\label{(uno)} \ $\Theta(A_{2,p-i-1})\Theta(A_{2,p-i-2})^{-1}=A(i)$, for $1\leq i\leq p-4$.
\item\label{(dos)} \ $\Theta(A_{2,2})=A(p-3)$.
\item\label{(tres)} \ $\Theta(w)^{-1} \Theta(A_{1,2})\Theta(v)=B(p-3)$.
\item\label{(cuatro)} \ $\Theta(v)^{-1}\Theta(A_{1,p-i-2})^{-1}\Theta(A_{1,p-i-1})\Theta(v)=B(i)$, for $1\leq i\leq p-4$.
\item\label{(cinco)} \ $\Theta(A_{1,i+2})^{-1}\Theta(A_{1,i+3})=C(i)$, for $1\leq i\leq p-4$.
\item\label{(seis)} \ $\Theta(A_{1,p-1})^{-1}=C(p-3)$.
\item\label{(siete)} \ $\Theta(v)\Theta(A_{2,p-1})\Theta(u)=D(p-3)$.
\item\label{(ocho)} \ $\Theta(v)\Theta(A_{2,i+3})\Theta(A_{2,i+2})^{-1}\Theta(v)^{-1}=D(i)$, for $1\leq i\leq p-4$.
\end{enumerate}
The point then is that Corollary \ref{abcd}, relations (\ref{lasdeconmutacion}) and relations (\ref{(uno)})--(\ref{(ocho)}) imply that elements (\ref{45})--(\ref{51}) determine a well defined morphism $\Theta\colon B_{3p-2}(p\times3)\to H_p$ sending $\alpha(i)$, $\beta(i)$, $\gamma(i)$ and $\delta(i)$, for $1\leq i\leq p-3$, into $A(i)$, $B(i)$, $C(i)$ and $D(i)$, respectively. In particular $\Theta\circ\theta=1$. On the other hand, the equality $\theta\circ\Theta=1$ follows directly from relations \emph{(\ref{eqi})--(\ref{eqvi})} in Lemma~\ref{auxi}. 
\end{proof}

We close the section by noticing that the group $H_p$ is an HNN extension of a RAAG. Let $G$ be a group presented through a set of generators $\mathcal{G}$ and a set of relators $\mathcal{R}$, and assume that $H_1$ and $H_2$ are isomorphic subgroups of $G$. Choose an isomorphism $\varphi\colon H_1\to H_2$ and let $v\not\in G$ be a new generator symbol. The HNN extension of $G$ with respect to $\varphi$, denoted by $G\star\varphi$, is the group presented through generators $\mathcal{G}\cup\{v\}$ and relators $\mathcal{R}\cup\{vhv^{-1}=\varphi(h)\colon h\in H_1\}$. As shown in \cite{MR32641}, the obvious map $G\to G\star\varphi$ is a monomorphism. Note that, by construction, the resulting subgroups $H_1$ and $H_2$ of $G\star\varphi$ are $v$-conjugated (in $G\star\varphi$).

\smallskip
In dealing with HNN extensions of RAAGs, the following fact proves to be handy:
\begin{lemma}\label{EHNN}
Let $G$ be an induced subgraph of the graph $\Gamma$, and let $H$ be the subgroup of $\raag(\Gamma)$ generated by the vertices of $G$. Then $H=\raag(G)$.
\end{lemma}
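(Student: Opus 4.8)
The plan is to prove Lemma \ref{EHNN} by showing that the natural surjection $\raag(G)\twoheadrightarrow H$ is injective. There is an obvious group homomorphism $\psi\colon\raag(G)\to\raag(\Gamma)$ induced by sending each generator (vertex of $G$) to the corresponding generator of $\raag(\Gamma)$; this is well-defined because $G$ is an \emph{induced} subgraph, so every defining commutation relation of $\raag(G)$ is also a relation of $\raag(\Gamma)$. By definition, the image of $\psi$ is exactly the subgroup $H$ generated by the vertices of $G$, so $\psi$ corestricts to a surjection $\raag(G)\twoheadrightarrow H$. The content of the lemma is that this map is injective, i.e.\ that the only relations holding among the vertices of $G$ inside $\raag(\Gamma)$ are consequences of the relations already present in $\raag(G)$.

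To establish injectivity, first I would construct a retraction $\rho\colon\raag(\Gamma)\to\raag(G)$. Define $\rho$ on generators by sending each vertex of $\Gamma$ lying in $G$ to itself, and each vertex of $\Gamma$ outside $G$ to the identity element $1$. I must check that $\rho$ respects the defining relations of $\raag(\Gamma)$: for each edge $\{a,b\}$ of $\Gamma$ we must verify that $\rho(a)$ and $\rho(b)$ commute in $\raag(G)$. If both $a,b\in G$, then since $G$ is induced the edge $\{a,b\}$ is an edge of $G$, so $\rho(a)\rho(b)=\rho(b)\rho(a)$ holds in $\raag(G)$; if either endpoint lies outside $G$, then the corresponding $\rho$-value is $1$, and commutation with $1$ is automatic. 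Hence $\rho$ is a well-defined homomorphism.

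The proof then concludes by composing: the composite $\rho\circ\psi\colon\raag(G)\to\raag(G)$ sends each generating vertex of $G$ to itself (since such a vertex lies in $G$, so $\rho$ fixes it), whence $\rho\circ\psi=\mathrm{id}_{\raag(G)}$. A homomorphism admitting a left inverse is injective, so $\psi$ is injective; combined with the surjectivity onto $H$ noted above, the corestricted map $\raag(G)\to H$ is an isomorphism, giving $H=\raag(G)$ as claimed.

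**The main obstacle** is not difficult but is exactly the point where the ``induced subgraph'' hypothesis is essential: one must verify that $\rho$ is well-defined as a homomorphism, and this is precisely where one uses that an edge of $\Gamma$ between two vertices of $G$ is necessarily an edge of $G$. Were $G$ merely a subgraph (allowing a pair of $G$-vertices to be joined in $\Gamma$ but not in $G$), the relation forcing those two generators to commute in $\raag(\Gamma)$ would have no counterpart in $\raag(G)$, the map $\rho$ would fail to be well-defined, and indeed the subgroup $H$ could be a proper quotient of $\raag(G)$ rather than isomorphic to it. Everything else is a routine verification on generators, which is presumably why the authors flagged the lemma as ``handy'' rather than deep.
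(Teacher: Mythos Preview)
Your proof is correct and follows essentially the same approach as the paper: both construct the natural inclusion $\raag(G)\to\raag(\Gamma)$ and a retraction sending vertices outside $G$ to the identity, then use the composite being the identity to conclude injectivity. One minor imprecision: the well-definedness of $\psi$ only requires that $G$ be a subgraph (edges of $G$ are edges of $\Gamma$); the \emph{induced} hypothesis enters exactly where you correctly identify it later, namely in verifying that the retraction $\rho$ is well-defined.
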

\begin{proof}
This is a standard property and we include proof details for completeness. The graph inclusion $G\hookrightarrow\Gamma$ determines a group morphism $\iota\colon\raag(G)\to\raag(\Gamma)$. Since $G$ is an induced subgraph of $\Gamma$, the rule
$$
v\mapsto\begin{cases}
v, & \text{ $v$ is vertex of $G$;} \\
1, & \text{ otherwise,}\end{cases}
$$
determines a group morphism $\pi\colon\raag(\Gamma)\to\raag(G)$ satisfying $\pi\circ\iota=1$. In particular $\iota$ is monic and sets the asserted isomorphism $\raag(G)\cong\text{Im}(\iota)=H$.
\end{proof}

Let $\mathcal{S}_{p-3}$ be the graph with vertices $A(i)$, $B(i)$, $C(i)$ and $D(i)$ for $1\leq i\leq p-3$ and, whenever $i+j>p-3$, four edges, one between $A(i)$ and $B(j)$, one between $B(i)$ and $C(j)$, one between $C(i)$ and $D(j)$, and one between $D(i)$ and $A(j)$. Of course, in RAAG notation, these edges account for relators (\ref{lasdeconmutacion}). For instance, since the set of relators (\ref{lasdeconjugacion}) is empty when $p=5$, Theorem \ref{theta} gives $B_{13}(5\times3)\cong H_5=\raag(\mathcal{S}_{2}+1)$, which recovers the first assertion in Example \ref{stillraags}.3 as well as Theorem \ref{resultadoprincipal} for $p=5$. The situation for $p\geq6$ will be fully parallel once conjugation relations (\ref{lasdeconjugacion}) are taken into account through the use of a suitable HNN extension.

\begin{proof}[Proof of Theorem \ref{resultadoprincipal}]
Let $\mathcal{X}_{p-3}$ (respectively, $\mathcal{Y}_{p-3}$) be the induced subgraph of $\mathcal{S}_{p-3}$ generated by vertices $A(i)$ and $B(i)$ for $1\leq i\leq p-5$ (respectively, $C(i)$ and $D(i)$ for $1\leq i\leq p-5$). Consider the bijection of vertices $\varphi\colon V(\mathcal{X}_{p-3})\to V(\mathcal{Y}_{p-3})$ given by
$$%\begin{equation}\label{correct}
\mbox{$\varphi (B(p-i-4))= C(i)$ \,and \,$\varphi (A(p-i-4))= D(i)$, \,for $1\leq i\leq p-5$.}
$$%\end{equation}
Turn $\varphi$ into an isomorphism of graphs by adding to $\mathcal{X}_{p-3}$ and to $\mathcal{S}_{p-3}$ (respectively, to $\mathcal{Y}_{p-3}$ and to $\mathcal{S}_{p-3}$) an edge between $A(i)$ and $B(j)$ (respectively, between $C(i)$ and $D(j)$) whenever $i+j<p-5$. Let $S_{p-3}$, $X_{p-3}$ and $Y_{p-3}$ be the resulting graph and induced subgraphs. The proof conclusion then follows from Theorem \ref{theta} by noticing that the new edges amount to commutativity relations $[A(i),B(j)]=1$ and $[C(i),D(j)]=1$, for $i+j<p-5$ which, at any rate, are forced by (\ref{lasdeconjugacion}).
\end{proof}

\section{Epilogue: (Not) Highly subdivided graphs}
Undoubtedly, discrete configuration spaces are foundational in our results. Yet, the graph we are interested in, namely $\Gamma_{p,q}$, is far from being sufficiently subdivided, which means that the homotopy type of the discrete configuration space $\UDF(\Gamma_{p,q},n)$ is not guaranteed to agree with that of the regular (point-wise) configuration space $\UF(\Gamma_{p,q},n)$ (see \cite[Theorem 2.1]{MR2701024}). Actually both topologies differ in general. For instance, for $p\geq2$, the first Betti number of $\UDF(\Gamma_{p\times2},2p-2)$ is $2p-3$ (Remark \ref{q2case}), whereas the results in \cite[Subsection 3.2]{MR3000570} yield $H_1(\UF(\Gamma_{p\times2},2p-2))=\mathbb{Z}$.

\smallskip
Despite intrinsic differences between discrete and point-wise configuration space models, the form of our results fits within general properties of (classical) graph braid groups. For instance, on a computational perspective, it is striking to remark that the first Betti number reported in Theorem~\ref{resultadocasiprincipal} agrees with that found by Ko-Park in \cite[Subsection 3.2]{MR3000570} for the point-wise configuration space $\UF(\Gamma_{p,q},n)$ for \emph{any} $n\geq2$. Theoretical coincidences are also notable. For one, $H_*(\UDF(\Gamma_{p,q},pq-2))$ is torsion-free, which matches the point-wise fact proved in \cite[Corollary~3.6]{MR3000570} for planar graphs (such as $\Gamma_{p,q}$). Likewise, we have proved that $\pi_1(\UDF(\Gamma_{p,q},pq-2))$ is a simple commutator-related group, which aligns with \cite[Conjecture 4.9]{MR3000570}. Furthermore, closely related to the simple commutator-related property is the possibility of having a graphical description, as disjoint circuits, of the factors in commutator relators. Such a situation has been shown to hold for $\UF(\Gamma,2)$ when $\Gamma$ is planar (see \cite[Theorem 4.8]{MR3000570} and \cite[Theorem 5.6 and Conjecture 5.7]{MR2949126}). This property also arises for the simple commutator-related group $B_{pq-2}(p\times q)$. Details will be spelled out elsewhere.

\smallskip
The similarities/differences between discrete and point-wise configuration spaces make it clear that the study of the former ones, whether or not their homotopy type agree with that of the latter ones, is an interesting and rich subject of research. See for instance \cite{MR3003699}.

%\bibliographystyle{plain}
%\bibliography{bib}

{\sc \ 

Mathematics Department

Center for Research and Advanced Studies

Av.~I.P.N n\'umero~2508, San Pedro Zacatenco, Mexico City 07000, Mexico.}

\tt oalvarado@math.cinvestav.mx

jesus.glz-espino@cinvestav.mx and jesus@math.cinvestav.mx

\end{document}